\begin{document}
\title{Pre-modular  fusion categories of small global dimensions}
\author{Zhiqiang Yu}
\date{}
\maketitle

\newtheorem{theo}{Theorem}[section]
\newtheorem{prop}[theo]{Proposition}
\newtheorem{lemm}[theo]{Lemma}
\newtheorem{coro}[theo]{Corollary}
\theoremstyle{definition}
\newtheorem{conj}[theo]{Conjecture}
\newtheorem{defi}[theo]{Definition}
\newtheorem{exam}[theo]{Example}
\newtheorem{ques}[theo]{Question}
\newtheorem{remk}[theo]{Remark}

\newcommand{\A}{\mathcal{A}}
\newcommand{\B}{\mathcal{B}}
\newcommand{\C}{\mathcal{C}}
\newcommand{\D}{\mathcal{D}}
\newcommand{\E}{\mathcal{E}}
\newcommand{\I}{\mathcal{I}}
\newcommand{\FC}{\mathbb{C}}
\newcommand{\FQ}{\mathbb{Q}}
\newcommand{\M}{\mathcal{M}}
\newcommand{\N}{\mathcal{N}}
\newcommand{\Q}{\mathcal{O}}
\newcommand{\W}{\mathcal{W}}
\newcommand{\Y}{\mathcal{Z}}
\newcommand{\Z}{\mathbb{Z}}
\theoremstyle{plain}

\abstract
We first prove an analogue  of Lagrange theorem for global dimensions of fusion categories, then we give a complete  classifications of pre-modular fusion categories of integer global dimensions less than or equal to $10$.

\bigskip
\noindent {\bf Keywords:} Global dimension; pre-modular fusion category

Mathematics Subject Classification 2010: 18D10 $\cdot$ 16T05
\section{Introduction}
Throughout this paper, let $\FC$ be the field of complex numbers, $\FC^{*}:=\FC\backslash {\{0}\}$, $\FQ$ and $\overline{\FQ}$   denote the fields of rationals and its algebraic closure, respectively. For $r\in \mathbb{N}$, let $\Z_r:=\Z/r$.

A fusion category $\C$ is a semisimple $\FC$-linear finite abelian rigid monoidal category. Fusion category $\C$ spherical, if $\C$ admits a pivotal structure $j$, which is a natural isomorphism from identity tensor functor $\text{id}_\C$ to double dual tensor functor $(-)^{**}$, and the pivotal structure $j$ satisfies $\text{dim}_j(X)=\text{dim}_j(X^*)$ for any object $X$ of $\C$, here $\text{dim}_j(X)$ is the quantum dimension of object $X$ determined by $j$, see section \ref{preliminaries} for definition.

We know that all fusion categories of prime FP-dimensions are pointed \cite[Corollary 8.30]{ENO1}. However, this is not the case for global dimensions.  In \cite[Example 5.1.2]{O3},  Ostrik classified all spherical fusion categories of integer global dimensions less than or equal to $5$. These fusion categories are  either pointed, or tensor equivalent to an Ising category $\I$ or equivalent to a Deligne tensor product $YL\boxtimes \overline{YL}$, where $YL$ is a Yang-Lee fusion category and $ \overline{YL}$  is a Galois conjugate of $YL$. Obviously, $YL\boxtimes \overline{YL}$ is not pointed and  $\text{dim}(YL\boxtimes \overline{YL})=5$. Therefore, this is a non-trivial task to classify spherical fusion categories of  given  global dimensions. Meanwhile, for a given global dimension, it follows from \cite[Theorem 1.1.1]{O3} that there are finitely many tensor equivalence classes  of spherical fusion categories, then it is doable to classify spherical fusion categories of small global dimensions.

When classifying spherical fusion categories by global dimensions, one of the main difficulties is   to restrict the rank of these fusion categories, and there is no general method, see \cite[Lemma 4.2.2]{O3}. Recall that a spherical fusion category $\C$ is a pre-modular fusion category  if $\C$ is braided. For pre-modular fusion category $\C$, we have the $S$-matrix and $T$-matrix (see section \ref{preliminaries} for definition), which reflect some important aspects of  pre-modular fusion category $\C$,   moreover they enjoy interesting arithmetic properties \cite{BNRW,DongLinNg,NS}, and  see \cite{BGNPRW,BNRW,BPRZ,Green,RSW} for applications to the  classifications of (super-)modular fusion categories of low ranks. So, in this paper, we turn our attentions to pre-modular fusion categories.

It is well-known that the Lagrange theorem for FP-dimension holds for fusion categories
\cite[Proposition 8.15]{ENO1}: Given a  fusion category $\C$ and fusion subcategory $\D\subseteq\C$, the ratio $\frac{\text{FPdim}(\C)}{\text{FPdim}(\D)}$ is an algebraic integer. In this paper,
 we first prove  an  analogue of Lagrange theorem for global dimension. That is,
 \begin{theo}[Theorem \ref{Lagrange}]Let  $\C$ be a fusion category. If $\D\subseteq\C$ is a fusion subcategory, then $\frac{\text{dim}(\C)}{\text{dim}(\D)}$ is an algebraic integer.
 \end{theo}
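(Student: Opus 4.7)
The plan is to adapt the proof of the Frobenius--Perron analog \cite[Proposition 8.15]{ENO1}, replacing the Frobenius--Perron regular element of $\D$ by the ``dimension-regular'' element
\[
R_\D' \;:=\; \sum_{Y\in\mathrm{Irr}(\D)} \dim(Y)\,Y \;\in\; K_0(\D)\otimes_\Z \overline{\FQ}.
\]
Since the categorical dimension is a character of $K_0(\D)$, Frobenius reciprocity gives $X\cdot R_\D' = \dim(X)\,R_\D'$ for every $X\in K_0(\D)$, and consequently $\dim(R_\D')=\sum_Y\dim(Y)^2=\dim(\D)$.

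First I would view $\C$ as a right $\D$-module category and decompose $\C=\bigoplus_\alpha \M_\alpha$ into indecomposable right $\D$-module subcategories. Every simple $M\in\mathrm{Irr}(\M_\alpha)$ is a simple object of $\C$, so $|M|^2$ is a totally positive algebraic integer, and setting $\dim(\M_\alpha):=\sum_{M\in\mathrm{Irr}(\M_\alpha)}|M|^2$ yields the additive decomposition $\dim(\C)=\sum_\alpha \dim(\M_\alpha)$. The problem therefore reduces to showing $\dim(\M_\alpha)/\dim(\D)\in\overline{\Z}$ for each $\alpha$.

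For this, fix $M_0\in\mathrm{Irr}(\M_\alpha)$ and form the internal endomorphism algebra $A:=\underline{\mathrm{End}}(M_0)\in\D$, identifying $\M_\alpha\simeq A\text{-mod}_\D$. The element $R_\D'\cdot M_0$ lies in the one-dimensional $\dim$-eigenspace of $K_0(\M_\alpha)\otimes\FC$, so equals $\lambda\,Z_{\M_\alpha}$ with $Z_{\M_\alpha}:=\sum_M\dim(M)\,M$. Matching coefficients of $M_0$ on both sides---using $\lambda=\dim(\D)\dim(M_0)/\dim(\M_\alpha)$ (determined by applying $\dim$) on one hand, and $[R_\D'\cdot M_0:M_0]=\sum_Y\dim(Y)[YM_0:M_0]=\dim(A)$ by Frobenius reciprocity on the other---should yield the key identity
\[
\dim(\M_\alpha)\cdot\dim(A)\;=\;|M_0|^2\cdot\dim(\D),
\]
so that $\dim(\M_\alpha)/\dim(\D)=|M_0|^2/\dim(A)$.

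The main obstacle will be the final algebraic-integrality step, since $|M_0|^2/\dim(A)$ is \emph{a priori} only an algebraic number. One recognizes this ratio as the quantum dimension of $M_0$ regarded as a simple object of the module category $A\text{-mod}_\D$; its integrality then follows from the general fact that dimensions of simples in an indecomposable module category over a fusion category are algebraic integers, which can itself be established either through the induction-restriction adjunction between $\D$ and $A\text{-mod}_\D$, or by passing to the Drinfeld center $\mathcal{Z}(\D)$ and invoking M\"uger's centralizer theorem for the modular category $\mathcal{Z}(\D)$.
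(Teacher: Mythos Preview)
Your approach is genuinely different from the paper's, but it has two real gaps that prevent it from going through as written.

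The first gap is the assertion that $R_\D'\cdot M_0$ lies in a \emph{one--dimensional} $\dim$--eigenspace of $K_0(\M_\alpha)\otimes\FC$. In the Frobenius--Perron argument of \cite[Proposition~8.15]{ENO1} this is exactly the step supplied by the Perron--Frobenius theorem: for an indecomposable nonnegative matrix the Perron eigenvalue is simple. Categorical dimensions in a non--pseudo-unitary category need not be positive, so the matrix of $R_\D'$ acting on $K_0(\M_\alpha)$ is only real symmetric, not nonnegative, and there is no a priori reason for the $\dim$--eigenspace to be one--dimensional. Without this, matching the $M_0$--coefficient gives you $[R_\D'M_0:M_0]=\dim(A)$ and applying $\dim$ gives $\sum_M c_M\dim(M)=\dim(\D)\dim(M_0)$, but these two facts alone do \emph{not} yield the identity $\dim(\M_\alpha)\dim(A)=|M_0|^2\dim(\D)$. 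The second gap is the one you yourself flag: even granting that identity, the integrality of $|M_0|^2/\dim(A)$ is the crux, and ``the quantum dimension of $M_0$ in $A\text{-mod}_\D$'' is not a well--defined notion in general (module categories do not carry canonical dimension functions), so neither of your suggested routes is more than a gesture. In fact, pursuing your second route---passing to the Drinfeld center---seriously would lead you to the paper's argument.

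The paper avoids both issues by working entirely inside $\Y(\C)$. After reducing to the spherical case via pivotalization, it invokes the correspondence of \cite[Theorem~4.10]{DMNO} to produce a connected \'etale subalgebra $A\subseteq\Phi(I)$ in $\Y(\C)$ with $\Y(\D)\cong\Y(\C)^0_A$; the point requiring care is that $\dim(A)>0$, which the paper checks using that the simple constituents of $\Phi(I)$ have dimensions $\dim(\C)/f_\chi>0$ for formal codegrees $f_\chi$ \cite[Theorem~2.13]{O2}. Then \cite[Theorem~4.5]{KiO} makes the equivalence modular, so $\dim(\D)^2=\dim(\Y(\C)^0_A)=\dim(\C)^2/\dim(A)^2$, and hence $\dim(\C)/\dim(\D)=\dim(A)$ is the dimension of an object of $\Y(\C)$, automatically an algebraic integer. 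No eigenspace multiplicity analysis and no module--category dimension theory are needed.
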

Then we can use  Theorem \ref{Lagrange} to restrict global dimensions of fusion subcategories of $\C$, moreover we think Theorem \ref{Lagrange} can be used in the future research of fusion categories. However, we need to point it out that $\frac{\text{FPdim}(\C)}{\text{FPdim}(\D)}\neq\frac{\text{dim}(\C)}{\text{dim}(\D)}$ in general, see Remark \ref{nonequratios} for some examples. Together with the techniques developed in \cite{BNRW,BPRZ,O2,O3},  we obtain some  classifications results on pre-modular fusion categories of small integer global dimensions.

This paper is organized as follows.  In section \ref{preliminaries}, we recall some basic notions and notations of    fusion categories, such as pre-modular fusion categories, global dimensions, formal codegrees and $d$-numbers. In  section \ref{section3},   we   prove the Lagrange theorem   for global dimension of fusion categories in Theorem \ref{Lagrange}. In subsection \ref{subsection4.1}, we show that pre-modular fusion categories of   dimension  $7$ are pointed in Theorem \ref{moddimen7}. In subsection \ref{subsection4.2}, we classify pre-modular fusion categories of global dimension $8$, $9$ and $10$ in Corollary \ref{predimen8}, Theorem \ref{dimension9} and  Corollary \ref{corodimen10}, respectively. In subsection \ref{subsection4.3}, we   show that spherical fusion categories of global dimension $6$ are weakly integral, see Theorem \ref{spherical6}.

\section{Preliminaries}\label{preliminaries}
\subsection{Spherical fusion category}
Given a fusion category $\C$, let $\Q(\C)$ be the set of isomorphism classes of simple objects of $\C$. The cardinal of $\Q(\C)$ is called rank of $\C$, and will denoted by $\text{rank}(\C)$.
It is well-known that there is a unique ring  homomorphism  FPdim(-)  from the Grothendieck ring $\text{Gr}(\C)$ of $\C$ to $\FC$ such that $\text{FPdim}(X)\geq1$ is an algebraic integer for all objects $X\in\Q(\C)$
\cite[Theorem 8.6]{ENO1}, and $\text{FPdim}(X)$ is called the Frobenius-Perron dimension of object $X$. The Frobenius-Perron dimension $\text{FPdim}(\C)$ of fusion category $\C$ is defined by
\begin{align}
\text{FPdim}(\C):=\sum_{X\in\Q(\C)}\text{FPdim}(X)^2.
\end{align}

A fusion category $\C$ is weakly integral, if $\text{FPdim}(\C)\in\Z$; $\C$ is integral, if  $\text{FPdim}(X)\in\Z$ for all $X\in\Q(\C)$. A fusion category $\C$ is pointed  if and only if all simple objects of $\C$ have FP-dimension $1$, so $\C\cong \text{Vec}_G^\omega$ \cite{ENO1}, where $\text{Vec}_G^\omega$ is the category of $G$-graded finite-dimension vector spaces over $\FC$, $\omega\in Z^3(G,\FC^*)$ is a $3$-cocycle. We use $\C_\text{pt}$ and $\C_\text{int}$ to denote the maximal pointed fusion subcategory and  the maximal integral fusion subcategory of $\C$, respectively.

Assume that $X\in\C$ is  an object, let  $(X^*, \text{ev}_X,\text{coev}_X)$ be a left dual object of $X$. That is, there exist natural morphisms $\text{coev}_X:I\to X\otimes X^*$ and $\text{ev}_X:X^*\otimes X\to I$ satisfying the following equations
\begin{align*}
\text{id}_X\otimes \text{ev}_X \circ \text{coev}_X\otimes \text{id}_X=\text{id}_X,\quad \text{ev}_X \otimes \text{id}_{X^*} \circ \text{id}_{X^*}\otimes \text{coev}_X=\text{id}_{X^*}.
\end{align*}
Here, we suppress the associativity and   unit constraints  of $\C$.

In a fusion category $\C$, since $X\cong X^{**}$ and $\text{Hom}_\C(X,X)\cong \FC$ for   object  $X\in\Q(\C)$, up to scalar, there is a unique isomorphism $\alpha_X:X\overset{\sim}{\to} X^{**}$. Then for any morphism $f:X\to X$, following \cite{ENO1,Mu1},  we define the (categorical) trace of $\alpha_X\circ f$ as the following scalar
\begin{align}
\text{Tr}(\alpha_X\circ f):=\text{ev}_{X^*}\circ  (\alpha_X\circ f)\otimes \text{id}_{X^*}\circ\text{ coev}_X: I\to I.
\end{align}
Then the square norm of simple object $X$ \cite{Mu1} is defined as
\begin{align}
|X|^2:=\text{Tr}(\alpha_X) \text{Tr}((\alpha_X^*)^{-1}),
\end{align}
and the global (or, categorical) dimension of   fusion category $\C$ as
\begin{align}
\text{dim}(\C):=\sum_{X\in\Q(\C)}|X|^2.
\end{align}

It is easy to see that global dimension $\text{dim}(\C)$ (indeed, $|X|^2$) is independent of the choice of isomorphisms ${\{\alpha_X|X\in\Q(\C)}\}$, moreover, $\text{dim}(\C)\geq1$ is an algebraic integer
\cite[Theorem 2.3]{ENO1}.
It follows from \cite[Proposition 8.22]{ENO1} that the ratio $\frac{\text{dim}(\C)}{\text{FPdim}(\C)}\leq1$ is an algebraic integer. In addition, $\text{dim}(\C)>\frac{4}{3}$  if $\C$ is a non-trivial spherical fusion category \cite[Theorem 1.1.2]{O3}. A fusion category $\C$ is said to be pseudo-unitary if $\text{dim}(\C)=\text{FPdim}(\C)$.  For more properties of global dimension, we refer the readers to references \cite{EGNO,ENO1,Mu1,O3}.

Let $\C$ be a pivotal fusion category with a pivotal structure $j$, that is, $j$ is a natural isomorphism from identity tensor functor $\text{id}_\C$ to the double dual tensor functor $(-)^{**}$. Then we define $\text{dim}_j(X):=\text{Tr}(j_X)$,  the quantum (or categorical) dimension of $X$  determined by $j$, moreover $\text{dim}_j(-)$ induces a homomorphism from $\text{Gr}(\C)$ to $\FC$ \cite[Proposition 4.7.12]{EGNO}.  A direct computation shows that $j_{X^*}=((j_X)^*)^{-1}$
\cite[Exercise 4.7.9]{EGNO}, so $\overline{\text{dim}_j(X)}=\text{dim}_j(X^*)$ by \cite[Proposition 2.9]{ENO1} and $|X|^2=\text{dim}_j(X)\text{dim}_j(X^*)$. Hence,
\begin{align}
\text{dim}(\C)=\sum_{X\in\Q(\C)}\text{dim}_j(X)\text{dim}_j(X^*)
=\sum_{X\in\Q(\C)}\text{dim}_j(X)\overline{\text{dim}_j(X)}.
\end{align}
It was conjectured in \cite[Conjecture 2.8]{ENO1} that every fusion category admits a pivotal structure. A pivotal fusion category $\C$ is  spherical, if $\C$ admits a pivotal structure $j$ such that $\text{dim}_j(X)=\text{dim}_j(X^*)$ for all objects $X$ of $\C$. Thus, for a spherical fusion category $\C$, we have
 \begin{align}
\text{dim}(\C)=\sum_{X\in\Q(\C)}\text{dim}_j(X)^2.
\end{align}

 We fix a spherical structure  $j$ of $\C$,   and we use $\text{dim}(X)$ instead of  $\text{dim}_j(X)$ to denote the quantum dimension of $X$ below.
Given an arbitrary spherical fusion category $\C$,  we can consider the twist (or Galois conjugate) $\C^\sigma$ of $\C$, where $\sigma\in\text{Gal}(\overline{\mathbb{Q}}/\mathbb{Q})$. More precisely, $\C^\sigma$ is a fusion category with the same monoidal functor $\otimes$ as $\C$, but the associator of $\C^\sigma$ is obtained by composing  the one of $\C$ with automorphism $\sigma$. Moreover, $\text{dim}(\C^\sigma)=\sigma(\text{dim}(\C))$.

We say that a fusion category $\C$ is simple, if $\C$ does not contain any fusion subcategory  other than $\C$ and $\text{Vec}$. For example, pointed fusion category $\text{Vec}^\omega_{\Z_p}$ is simple for any prime $p$; and fusion category $\text{Rep}(G)$ is simple if and only if  $G$ is a finite simple group, where $\text{Rep}(G)$ is the category of finite-dimensional representations of $G$  over $\FC$.

\subsection{Pre-modular fusion category}\label{subsection2.2}
A fusion category $\C$ is a braided  fusion category if $\C$ admits a braiding $c$. Specifically, for any objects $X,Y,Z\in\C$, there exists a natural isomorphism $c_{X,Y}:X\otimes Y\overset{\sim}{\to} Y\otimes X$, which satisfies $c_{X,I}=c_{I,X}=\text{id}_X$, $c_{X\otimes Y,Z}=c_{X,Z}\otimes \text{id}_Y \circ \text{id}_X\otimes c_{Y,Z}$, $c_{Z,X\otimes Y}=\text{id}_X\otimes c_{Z,Y}\circ c_{Z,X}\otimes \text{id}_Y$, here we suppress the associativity isomorphism of $\C$.

Let $\D$ be a fusion subcategory of braided fusion category $\C$, the centralizer of $\D$ in $\C$ is the following fusion subcategory
\begin{align*}
\D_\C'={\{X\in\C|c_{Y,X} c_{X,Y}=\text{id}_{X\otimes Y},\forall Y\in\D}\}.
\end{align*}
 We call $\C':=\C_\C'$ the M\"{u}ger center of $\C$ \cite{Mu}. $\D$ is symmetric if $\D\subseteq\D_\C'$; in addition,  a symmetric fusion category $\D$ is Tannakian if $\D\cong\text{Rep}(G)$, where the braiding of $\text{Rep}(G)$ is given by the  reflection of vector spaces. Moreover, if $\C'=\text{Vec}$, then   $\C$ is said to be non-degenerate, where $\text{Vec}$ is the category of finite-dimensional vectors spaces over $\FC$.

A braided fusion category $\C$ is a pre-modular (or  ribbon) category if $\C$ is spherical.
Given a pre-modular fusion category $\C$, we can define the $S$-matrix $S=(s_{X,Y})_{X,Y\in\Q(\C)}$ and $T$-matrix $T=(T_{X,Y})_{X,Y\in\Q(\C)}$ of $\C$ \cite{BK,EGNO}. Explicitly, $s_{X,Y}=\text{tr}(c_{Y,X}c_{X,Y})$ and $T_{X,Y}=\delta_{X,Y}\theta_X$ for all $X,Y\in\Q(\C)$, where $\theta$ is the ribbon structure of $\C$. Following \cite{BK}, we use $\C(\mathfrak{g},q,k)$ to denote the pre-modular fusion category obtained from representation category of quantum group $U_q(\mathfrak{g})$, where $k$ is a positive integer and $q^2$ is a root of unity of order $k$.

Consequently, a pre-modular fusion category $\C$ is modular if and only if its $S$-matrix is non-degenerate \cite{BK,DrGNO2,EGNO,Mu}, equivalently its M\"{u}ger center $\C'=\text{Vec}$. It is well-known that pointed modular fusion categories are in bijective correspondence with metric groups,
 see \cite[Appendix A]{DrGNO2}. We  denote by $\C(G,\eta)$ the  pointed modular fusion category determined by metric group $(G,\eta)$ below, where $\eta$ is a non-degenerate quadratic form on $G$.

 Meanwhile, a pre-modular fusion category $\C$ is super-modular, if $\C'\cong \text{sVec}$, where $\text{sVec}$ is the category of finite-dimensional super-vectors spaces over $\FC$. Throughout this paper, we assume $\text{dim}(\chi)=1$ and $\theta_\chi=-1$, where $\text{sVec}=\langle\chi\rangle$. It follows from \cite[Lemma 5.4]{Mu} or \cite[Lemma 3.28]{DrGNO2} that $\text{rank}(\C)$ must be even if $\C'=\text{sVec}$. In addition, given a super-modular fusion category $\C$, based on the partition of set $\Q(\C)=\Pi_0\cup \Pi_1$ (see \cite{BGNPRW}), we have
  \begin{align}\label{halfdimen}
 \frac{\dim(\C)}{2}=\sum_{X\in\Pi_0}\dim(X)^2,
\end{align}
and   one can define the so-called naive fusion rule: for arbitrary simple objects $X, Y, Z\in\Pi_0$,
\begin{align}
 \widehat{N}_{X,Y}^Z:=\text{dim}_{\FC}(\text{Hom}_\C(X\otimes Y,Z))+\text{dim}_{\FC}(\text{Hom}_\C(X\otimes Y,\chi\otimes Z)).
\end{align}

Moreover, there is a non-degenerate symmetric matrix $\widehat{S}$ such that the $S$-matrix of $\C$ is
\begin{align*}
S=\left(
    \begin{array}{cc}
      1 & 1 \\
      1 & 1 \\
    \end{array}
  \right)\otimes\widehat{S}=
\left(
                                       \begin{array}{cc}
                                         \widehat{S} & \widehat{S} \\
                                         \widehat{S} & \widehat{S} \\
                                       \end{array}
                                     \right),
\end{align*}
and $\widehat{S}$ has orthogonal rows,
see \cite[Proposition 2.7]{BGNPRW} for more properties of $\widehat{S}$ and \cite{Yu} for some applications.
 And some techniques for classifying super-modular category $\C$ are established in \cite{BPRZ}, which are similar to that of modular fusion categories \cite{BNRW,EGNO}. Recall that the  Galois group $\text{Gal}(\C):=\text{Gal}(\FQ(\widehat{S})/\FQ)$  is an abelian group, where $\FQ(\widehat{S})$ is the extension field of $\FQ$ by the coefficients of $\widehat{S}$. Then, for any $\sigma\in \text{Gal}(\C)$, we have
\begin{align}
\sigma(\frac{s_{X,Y}}{s_{I,Y}})
=\frac{s_{X,\hat{\sigma}(Y)}}{s_{I,\hat{\sigma}(Y)}}, ~\forall X,Y\in\Pi_0.
\end{align}
where $\hat{\sigma}$ is the unique  permutation  on set $\Pi_0$, which is induced by $\sigma$. Since the morphism $\sigma\mapsto\hat{\sigma}$ is a well-defined group isomorphism, sometime we also use $\text{Gal}(\C)$ to denote the image of $\text{Gal}(\C)$.

Given a pre-modular fusion category $\C$,  let $X,Y\in\Q(\C)$, then map $h_X(Y):=\frac{s_{X,Y}}{s_{I,X}}$ defines a homomorphism from Grothendieck ring $\text{Gr}(\C)$ to $\FC$ \cite[Proposition 8.13.11]{EGNO}. In particular, if $\C$ is modular, then  the Verlinde formula \cite{BK,EGNO}  says that
the set ${\{h_X|\forall X\in\Q(\C)}\}$ is a complete set of homomorphisms from Grothendieck ring $\text{Gr}(\C)$ to $\FC$.
 Thus, if $\text{dim}(X)^2\in\Z$ for all $X\in\Q(\C)$, then $\C$ is   weakly integral, as the rational number  $\text{FPdim}(\C)=\frac{\text{dim}(\C)}{\text{dim}(Y)^2}$ (for certain object $Y\in\Q(\C)$) is an algebraic integer.
\begin{remk}\label{orbsigmaI}Given a super-modular fusion category $\C$, let $\varphi$ be a Galois conjugate of homomorphism of $\text{dim}(-)$, that is, there exists a $\sigma\in \text{Gal}(\C)$ such that $\varphi(X)=\sigma(\text{dim}(X))$ for all $X\in\Q(\C)$. Then $\varphi$ is determined by the object $\hat{\sigma}(I)\in\Pi_0$. Indeed, for any $X\in \Pi_0$,
\begin{align}
\varphi(X)=\sigma(\text{dim}(X))=\sigma(\frac{s_{I,X}}{s_{I,I}})
=\frac{s_{\hat{\sigma}(I),X}}{s_{\hat{\sigma}(I),I}}=h_{\hat{\sigma}(I)}(X),
\end{align}
meanwhile for object $X\in\Pi_1$, there exists a unique object $Y\in\Pi_0$ such that $X=\chi\otimes Y$, and
 \begin{align}
\varphi(X)=\sigma(\text{dim}(X))=\sigma(\text{dim}(Y))
=\frac{s_{\hat{\sigma}(I),Y}}{s_{\hat{\sigma}(I),I}}
=\frac{s_{\hat{\sigma}(I),X}}{s_{\hat{\sigma}(I),I}}=h_{\hat{\sigma}(I)}(X).
\end{align}
Hence, a super-modular fusion category $\C$ is integral if and only if $\text{dim}(X)\in\Z$ for all objects $X\in\Q(\C)$ \cite[Exercise 9.6.2]{EGNO}, if and only if $\hat{\sigma}(I)=I$ for all $\sigma\in \text{Gal}(\C)$. If $\text{dim}(\C)\in\Z$, then $\text{dim}(\hat{\sigma}(I))^2=1$ for all $\sigma\in\text{Gal}(\C)$.
\end{remk}

\subsection{Formal codegrees and $d$-numbers}\label{subsection2.3}
Given   a fusion category $\C$, let $\text{Irr}(\text{Gr}(\C))$ be the set of isomorphism classes of irreducible representations of   $\text{Gr}(\C)$ over $\FC$. For   irreducible representations $\varphi,\varphi'\in \text{Irr}(\text{Gr}(\C))$, let $\text{Tr}_\varphi(-)$ be the ordinary trace function on representation $\varphi$, then   there exists a central  element
\begin{align}
\alpha_\varphi:=\sum_{X\in\Q(\C)}\text{Tr}_\varphi(X)X^*\in \text{Gr}(\C)\otimes_\Z\FC
 \end{align}
 such that $\varphi'(\alpha_\varphi)=0$ if $\varphi\ncong\varphi'$, and $f_\varphi=\varphi(\alpha_\varphi)$ is a   positive algebraic integer \cite{Lusztig,O1}, $f_\varphi$ is called a formal codegree of $\C$ \cite{O1}.

Moreover, it was proved in \cite[Corollary 2.14]{O2} that $\frac{\text{dim}(\C)}{f_\varphi}$ is also an algebraic integers. And formal codegrees of fusion category $\C$ satisfy the following equation \cite[Proposition 2.10]{O2}
\begin{align}\label{classequation}
\sum_{\varphi\in \text{Irr}(\text{Gr}(\C))}\frac{\varphi(1)}{f_\varphi}=1.
\end{align}
Obviously, we have $f_\varphi\geq1$. Indeed, if the spherical fusion category $\C$ is non-trivial, then $f_\varphi>\sqrt{\frac{2\text{rank}(\C)}{\text{rank}(\C)+1}}\geq\sqrt{\frac{4}{3}}$ for all $\varphi\in \text{Irr}(\text{Gr}(\C))$
by \cite[Theorem 4.2.1]{O3}.

Assume that there exists a  ring homomorphism $\phi$ from $\text{Gr}(\C)$ to $\FC$, then the corresponding  formal codegrees $f_\phi$   is given by
\begin{align}
\sum_{X\in\Q(\C)}\phi(X) \phi(X^*)=\sum_{X\in\Q(\C)}\phi(XX^*)=\sum_{X\in\Q(\C)}|\phi(X)|^2.
 \end{align}
In particular,   $\text{FPdim}(\C)$ and its Galois conjugates are formal codegrees of $\C$. Similarly, if fusion category $\C$ is pivotal,  then $\text{dim}(\C)$ is also a formal codegree.
Moreover, if $\C$ is a modular fusion category, then   formal codegrees of $\C$ are $\frac{\text{dim}(\C)}{\text{dim}(X)^2}$ for objects $X\in\Q(\C)$. Indeed, since $s_{I,X}=\text{dim}(X)$, the Verlinde formula \cite{BK,EGNO} implies that
\begin{align}
\sum_{Y\in\Q(\C)}h_X(Y)h_X(Y^*)=\sum_{Y\in\Q(\C)}\frac{s_{X,Y}}{\text{dim}(X)}\frac{s_{X,Y^*}}{\text{dim}(X)}
=\frac{\text{dim}(\C)}{\text{dim}(X)^2}.
\end{align}

An algebraic integer $\alpha$ is a $d$-number \cite[Definition 1.1]{O1}, if in the algebraic integer ring, the ideal generated by $\alpha$ is invariant under the action of Galois group $\text{Gal}(\overline{\mathbb{Q}}/\mathbb{Q})$. Equivalently, there exists a nonzero polynomial $f(x)=x^n+a_1x^{n-1}+\cdots a_{n-1}x+a_n\in\Z[x]$  such that $f(\alpha)=0$ and $(a_n)^i|(a_i)^n$  for all $1\leq i\leq n$, see \cite[Lemma 2.7]{O1} for more equivalent conditions. In a pre-modular fusion category $\C$, $\text{dim}(X)$ of simple objects $X$ are   $d$-numbers \cite[Theorem 1.8]{O1}. Moreover, it was proved in \cite[Theorem 1.2]{O1} that formal codegrees of  fusion categories are $d$-numbers. However, formal codegrees of  fusion rings  are not $d$-numbers in general, see \cite[Example 1.6]{O1}. Thus, we can use this property to detect whether a fusion ring is categorifiable, this is called the $d$-number test.

In addition,  we say that an algebraic integer $\alpha$ is totally real/positive, if $\alpha$ is still real/positive under any embedding of algebraic integers into field $\FC$. For example, $\sqrt{5}$ is not a totally positive integer. For any fusion category $\C$, let $X\in\Q(\C)$, $\text{FPdim}(X)^2$ is totally positive, thus $\text{FPdim}(\C)$ is totally positive. Indeed, all formal codegrees of $\C$ are totally positive by \cite[Remark 2.12]{O2}.
We also use the following theorem \cite[Corollary 8.53]{ENO1}, which is called   cyclotomic test  in \cite{O2}.
\begin{theo}\label{cyclotomic}Given a fusion category $\C$, let $\chi_1 $ be an irreducible representation of Grothendieck ring $\text{Gr}(\C)$. Then $\chi_1$ is defined over $\mathbb{Q}(\xi)$ for some root of unity $\xi$.
\end{theo}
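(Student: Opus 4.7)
The target is to prove that the splitting field of every irreducible representation $\chi_1$ of $\text{Gr}(\C)$ is cyclotomic. My plan is to reduce to the modular case by passing to the Drinfeld center, and then invoke the cyclotomicity of modular $S$-matrix entries.

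First, I would introduce the Drinfeld center $Z(\C)$, which is a modular fusion category. The forgetful functor $Z(\C)\to\C$ and its adjoint (the induction functor) give rise to ring maps between the Grothendieck rings, and crucially, every irreducible representation of $\text{Gr}(\C)\otimes_{\Z}\FC$ can be realised as a constituent of the pull-back of some representation of $\text{Gr}(Z(\C))\otimes_{\Z}\FC$ along induction. Since $Z(\C)$ is modular, $\text{Gr}(Z(\C))$ is commutative and all its irreducible representations are one-dimensional, given by the Verlinde formula $\chi_Y(X)=s_{X,Y}/s_{I,Y}$, i.e.\ ratios of entries of the $S$-matrix of $Z(\C)$.

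Next, I would invoke cyclotomicity of modular $S$-matrix entries. By Vafa's theorem the $T$-matrix of $Z(\C)$ has finite order, say $N$; the Ng--Schauenburg congruence subgroup theorem then implies that the projective $\text{SL}_2(\Z)$-representation defined by $(S,T)$ factors through $\Gamma(N)$, forcing every entry of $S$ and $T$ into the cyclotomic field $\FQ(\zeta_N)$. Consequently every one-dimensional character $\chi_Y$ of $\text{Gr}(Z(\C))$ is defined over $\FQ(\zeta_N)$, which is precisely the conclusion of Theorem~\ref{cyclotomic} in the modular case.

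Finally, I would descend from $\text{Gr}(Z(\C))$ to $\text{Gr}(\C)$. Because $\chi_1$ appears as a constituent of the pull-back of some $\chi_Y$ along induction, all its character values already lie in $\FQ(\zeta_N)$. The nontrivial part, and the main obstacle I anticipate, is upgrading cyclotomic character values to a cyclotomic realisation of $\chi_1$ itself when $\text{Gr}(\C)$ is non-commutative and $\dim\chi_1>1$. For this I would use a Brauer-type splitting-field argument: the algebra $\text{Gr}(\C)\otimes_{\Z}\FQ(\zeta_N)$ is semisimple with character values in the base field, so the obstruction to splitting $\chi_1$ is a class in $\text{Br}(\FQ(\zeta_N))$; this class vanishes because the explicit embedding of the $\chi_1$-isotypic component into the matrix block of $\text{Gr}(Z(\C))$ coming from the chosen $Y$ already provides a realisation with entries in $\FQ(\zeta_N)$. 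Enlarging $N$ if necessary to absorb any auxiliary roots of unity appearing in this realisation yields the desired cyclotomic $\xi$.
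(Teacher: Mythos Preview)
The paper does not prove this statement at all: it is quoted verbatim as \cite[Corollary 8.53]{ENO1} and used as a black box (the ``cyclotomic test''). There is therefore no in-paper argument to compare against; what you have sketched is essentially the strategy of the original ENO proof --- pass to the Drinfeld center, use cyclotomicity of the modular $S$-matrix, and descend to $\text{Gr}(\C)$.

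Two technical points in your outline deserve tightening. First, induction $I:\C\to Z(\C)$ is not a tensor functor, so it does not induce a ring homomorphism on Grothendieck rings and one cannot literally ``pull back a representation of $\text{Gr}(Z(\C))$ along induction''. The correct bridge is the forgetful functor $F:Z(\C)\to\C$, which \emph{is} monoidal and gives a ring map $F_*:\text{Gr}(Z(\C))\to\text{Gr}(\C)$ whose image lands in the center of $\text{Gr}(\C)$ (via the half-braidings); after tensoring with $\FC$ this surjects onto the center, which is what pins down the central characters --- and hence the character values of every irreducible $\chi_1$ --- inside the cyclotomic field. Second, your Brauer-obstruction step is the genuinely delicate part, and the phrase ``the explicit embedding \ldots\ already provides a realisation'' hides the work. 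In ENO this is handled by realising $\chi_1$ concretely on a multiplicity space attached to a simple summand of $I(I)\in Z(\C)$; that construction is visibly defined over $\FQ(\zeta_N)$ and bypasses any abstract Brauer-group argument. If you want your write-up to stand on its own, this is the place to supply an actual construction rather than an existence claim.
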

Therefore, $\text{FPdim}(X),\text{FPdim}(\C)\in\Z[\xi]$, $\forall X\in\Q(\C)$. That is, the Galois groups of minimal polynomials defining the FP-dimensions of simple objects and $\C$ have to be abelian. Hence, in this paper,  we  use program  GAP   to distinguish whether    the Galois groups of minimal polynomials of $\text{FPdim}(X)$ are abelian, also we can use GAP to do the $d$-number test.

\section{Lagrange theorem  for dimension fusion categories}\label{section3}
In this section, we   prove the Lagrange theorem for global dimension of fusion categories.
 Given a braided fusion category $\C$, an algebra $A\in\C$ is connected, if $\text{dim}_{\FC}(\text{Hom}_\C(I,A))=1$;   a commutative  algebra $A$ is said to be an \'{e}tale algebra  if $\C_A$ is semisimple
 \cite[Proposition 2.7]{DMNO}, where $\C_A$ is the category of right $A$-modules in $\C$.
Given a connected \'{e}tale algebra $A$ in $\C$,  the  subcategory $ \C^0_A\subseteq\C_A$   of dyslectic (or local) modules of $A$ in $ \C$ is   a braided fusion category.  See \cite{DMNO,KiO} for details about \'{e}tale algebras and their dyslectic modules.

Let $\C$ be a non-degenerate fusion category,   recall that a connected \'{e}tale algebra $A\in\C$ is a Lagrangian algebra if $\text{FPdim}(\C)=\text{FPdim}(A)^2$
\cite[Definition 4.6]{DMNO}. Now we are ready to   give a proof of  the Lagrange theorem for global dimension; for pseudo-unitary fusion categories, this is  exactly \cite[Proposition 8.15]{ENO1}.
\begin{theo}\label{Lagrange}Assume that $\C$ is a fusion category. If $\D\subseteq\C$ is a fusion subcategory, then $\frac{\text{dim}(\C)}{\text{dim}(\D)}$ is an algebraic integer.
\end{theo}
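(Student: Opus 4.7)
The plan is to reduce the statement to a claim about the Drinfeld centre of $\C$, where non-degeneracy is available, and to exhibit the desired ratio as the quantum dimension of an object in a spherical fusion category, which is automatically an algebraic integer. First, I would invoke M\"uger's identity $\text{dim}(Z(\E))=\text{dim}(\E)^{2}$ for $\E\in\{\C,\D\}$, which yields $\bigl(\text{dim}(\C)/\text{dim}(\D)\bigr)^{2}=\text{dim}(Z(\C))/\text{dim}(Z(\D))$. Since a root of a monic polynomial with algebraic integer coefficients is itself an algebraic integer, it suffices to prove that $\text{dim}(Z(\C))/\text{dim}(Z(\D))$ is an algebraic integer; this reduces the theorem to the modular setting.

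Next, I would construct from $\D\subseteq\C$ a connected \'etale algebra $A$ in the modular category $Z(\C)$ satisfying $Z(\C)_{A}^{0}\simeq Z(\D)$ as braided fusion categories. Such an $A$ should come from the Morita-theoretic correspondence between fusion subcategories and \'etale algebras in the centre in the spirit of \cite{DMNO}, applied to the canonical $\D$-bimodule structure on $\C$. The standard formula $\text{dim}(\E_{A}^{0})=\text{dim}(\E)/\text{dim}(A)^{2}$ valid in a non-degenerate braided fusion category $\E$, applied to $\E=Z(\C)$, then gives $\text{dim}(A)^{2}=\text{dim}(Z(\C))/\text{dim}(Z(\D))$ and hence $\text{dim}(A)=\text{dim}(\C)/\text{dim}(\D)$. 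Since $A$ is an object of the spherical fusion category $Z(\C)$, its quantum dimension is an eigenvalue of the nonnegative integer fusion matrix of $A$ acting on $\text{Gr}(Z(\C))$, and therefore an algebraic integer; combined with the reduction step this finishes the proof.

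The main technical hurdle is the second step: constructing the \'etale algebra $A$ and pinning down its categorical dimension outside the pseudo-unitary regime. When $\C$ is pseudo-unitary one has $\text{dim}=\text{FPdim}$, so the statement collapses to the classical Frobenius--Perron Lagrange theorem \cite[Proposition 8.15]{ENO1}. The substance of the new theorem lies precisely in showing that the Morita-theoretic dictionary, read off in terms of categorical (rather than Frobenius--Perron) dimensions, survives intact once $\text{dim}$ and $\text{FPdim}$ diverge, so that the equality $\text{dim}(A)=\text{dim}(\C)/\text{dim}(\D)$ remains valid in full generality. If this dimension control proves too subtle to extract from the centre directly, a fallback approach is to work instead with the relative centre $Z_{\D}(\C)$ and the analogous étale algebra it encodes, so that the divisibility $\text{dim}(\C)/\text{dim}(\D)$ can still be read as a quantum dimension in a spherical ambient.
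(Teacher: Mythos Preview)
Your strategy coincides with the paper's: pass to the Drinfeld centre, realise $\D$ via a connected \'etale algebra $A$ in $\Y(\C)$ with $\Y(\C)^0_A\simeq \Y(\D)$ (this is \cite[Theorem 4.10]{DMNO}), and read off $\text{dim}(A)=\text{dim}(\C)/\text{dim}(\D)$ from the dimension formula for local modules together with $\text{dim}(\Y(\E))=\text{dim}(\E)^2$.

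The paper resolves precisely the hurdle you flagged---making the categorical-dimension formula legitimate outside the pseudo-unitary regime---as follows. First it reduces to spherical $\C$ via pivotalization, so that $\Y(\C)$ is genuinely modular (you silently assumed this). Then it takes $A$ to be the \'etale subalgebra of the canonical Lagrangian algebra $\Phi(I)$ attached to $\D$; every simple summand of $\Phi(I)$ has dimension $\text{dim}(\C)/f_\chi>0$ for some formal codegree $f_\chi$ \cite[Theorem 2.13]{O2}, so $\text{dim}(A)>0$ and $A$ is a rigid algebra in the sense of \cite{KiO}. Finally $\theta_{\Phi(I)}=\text{id}$ \cite{NS,O3} forces $\theta_A=\text{id}$, which places $A$ inside the hypotheses of \cite[Theorem 4.5]{KiO}; that theorem then supplies simultaneously the \emph{modular} equivalence $\Y(\D)\simeq\Y(\C)^0_A$ and the identity $\text{dim}(\Y(\C)^0_A)=\text{dim}(\Y(\C))/\text{dim}(A)^2$ for categorical dimensions. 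With these two verifications in hand your argument goes through verbatim, and the reduction to the square as well as the relative-centre fallback become unnecessary.
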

\begin{proof}We can assume   $\C$  to be  a spherical fusion category. In fact,  it follows from \cite[Remark 3.1]{ENO1} and \cite[Proposition 5.14]{ENO1} that   the pivotalization $\widetilde{\C}$ of $\C$ is a spherical fusion category, and    $\text{dim}(\widetilde{\C})=2\text{dim}(\C)$
\cite[Remark 7.21.11]{EGNO}. Consequently, $\frac{\text{dim}(\C)}{\text{dim}(\D)}=\frac{\text{dim}(\widetilde{\C})}{\text{dim}(\widetilde{\D})}$, so we can replace $\C,\D$ by spherical fusion categories $\widetilde{\C}$ and $\widetilde{\D}$, respectively.

For any fusion subcategory $\D\subseteq\C$, up to isomorphism,  \cite[Theorem 4.10]{DMNO} says that there exists  a unique connected \'{e}tale subalgebra $A\subseteq\Phi(I)$ corresponding to $\D$, which satisfies equation
 \begin{align}
\frac{\text{FPdim}(\C)}{\text{FPdim}(\D)}=\text{FPdim}(A),
 \end{align}
 where $\Phi$ is the right adjoint functor of the forgetful tensor functor $F:\Y(\C)\to\C$, and  $\Phi(I)$ is  a Lagrangian algebra of $\Y(\C)$  \cite[Proposition 4.8]{DMNO}. Notice that  \cite[Theorem 4.10]{DMNO} also shows that there exists   a braided tensor equivalence of non-degenerate fusion categories $\Y(\D)\cong\Y(\C)^0_A$, we show that it is a modular equivalence below.

By definition,  $A=\oplus_{X\in\Q(\Y(\C))}[A:X]X$, where
\begin{align}
[A:X]:=\text{dim}_{\FC}(\text{Hom}_{\Y(\C)}(A,X)),
\end{align}
hence  we get that
\begin{align}
\text{dim}(A)=\sum_{X\in\Q(\Y(\C))}[A:X]\text{dim}(X).
\end{align}
If $[A:X]$ is non-zero, then   $\text{dim}(X)=\frac{\text{dim}(\C)}{f_\chi}$ for some formal codegree $f_\chi$ of $\C$ \cite[Theorem 2.13]{O2}.  Since  formal codegrees of $\C$ are positive algebraic integers,   $\text{dim}(A)>0$. Thus,   \cite[Remark 3.4]{DMNO} shows that the \'{e}tale algebra $A$ is a rigid $\C$-algebra in sense of \cite[Definition 1.11]{KiO}.
Since $\C$ is a spherical fusion category, Drinfeld center $\Y(\C)$ is a modular fusion category by \cite[Theorem 6.4]{Mu2}.  Meanwhile,  $\theta_{\Phi(I)}=\text{id}_{\Phi(I)}$ by \cite[Theorem 4.1]{NS} or \cite[Theorem 2.7]{O3}, where $\theta$ is the ribbon structure of $\Y(\C)$, then $\theta_A=\text{id}_A$ as $A\subseteq\Phi(I)$, and \cite[Theorem 4.5]{KiO} says that $\Y(\D)\cong\Y(\C)^0_A$ as modular fusion categories  as desired.

As $\text{dim}(\Y(\D))=\text{dim}(\D)^2$ by \cite[Theorem 2.15]{ENO1} \cite[Theorem 1.2]{Mu2},  again we deduce from \cite[Theorem 4.5]{KiO} that
\begin{align}
\text{dim}(\D)^2=\text{dim}(\Y(\D))=\text{dim}(\Y(\C)^0_A)
=\frac{\text{dim}(\Y(\C))}{\text{dim}(A)^2}=\frac{\text{dim}(\C)^2}{\text{dim}(A)^2},
\end{align}
so the ratio $\frac{\text{dim}(\C)}{\text{dim}(\D)}=\text{dim}(A)$ is an algebraic integer.
\end{proof}
\begin{remk}\label{nonequratios}Let $\C$ be a fusion category. For any  fusion subcategory $\D\subseteq\C$,  Theorem \ref{Lagrange} also says  that    $\frac{\text{dim}(\C)}{\text{dim}(\D)}=\frac{\text{FPdim}(\C)}{\text{FPdim}(\D)}$ if and only if $\text{FPdim}(A)=\text{dim}(A)$. This  equality fails in general, however. For example, let  $\D=YL$ be the Yang-Lee fusion category of global dimension $\frac{5+\sqrt{5}}{2}$,   its conjugate  $\overline{YL}$ has   global dimension $\frac{5-\sqrt{5}}{2}$. Let $\C=YL\boxtimes \overline{YL}$, so $\text{dim}(\C)=5$ and $\text{FPdim}(\C)=(\frac{5+\sqrt{5}}{2})^2=\frac{15+5\sqrt{5}}{2}$. The  two ratios above are not equal obviously.
\end{remk}
Given two fusion categories $\C$ and $\D$, recall that a tensor functor $F:\C\to\D$ is said to be surjective, if every simple object of $\D$ is a subobject of $F(X)$ for some object $X\in\C$; $F$ is injective if $F$ is bijective on sets of morphisms \cite[Definition 1.8.3]{EGNO}.

Same as \cite[Corollary 8.11]{ENO1}, we have the following corollary:
\begin{coro}\label{quotient}Let $\C$ and  $\D$ be fusion categories, assume that $F:\C\to\D$ is a surjective tensor functor. Then $\frac{\text{dim}(\C)}{\text{dim}(\D)}$ is an algebraic integer.
\end{coro}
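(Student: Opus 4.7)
The plan is to reduce Corollary \ref{quotient} to Theorem \ref{Lagrange} by realising $\D$, with its tensor product reversed, as a fusion subcategory of a fusion category whose global dimension coincides with $\text{dim}(\C)$. The natural candidate is the Morita dual $\C^*_\D:=\text{Fun}_\C(\D,\D)$, obtained by viewing $\D$ as a left $\C$-module category via $X\cdot Y:=F(X)\otimes Y$. As in the proof of Theorem \ref{Lagrange}, one first reduces to the spherical case by pivotalization: the lift $\widetilde{F}:\widetilde{\C}\to\widetilde{\D}$ of $F$ is again a surjective tensor functor, and since both global dimensions scale by a factor of $2$, the ratio is unchanged.

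The next step is to establish $\text{dim}(\C^*_\D)=\text{dim}(\C)$. Surjectivity of $F$ forces $\D$ to be indecomposable as a $\C$-module category: since $I_\D=F(I_\C)$ lies in any nonzero $\C$-submodule of $\D$, that submodule contains every simple subobject of every $F(X)$, and these generate $\D$. Hence $\C^*_\D$ is a fusion category Morita equivalent to $\C$. M\"{u}ger's theorem \cite{Mu2} then supplies a braided equivalence $\Y(\C^*_\D)\cong\Y(\C)$, and combining this with $\text{dim}(\Y(\E))=\text{dim}(\E)^2$ \cite[Theorem 2.15]{ENO1} yields the desired Morita-invariance identity.

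Finally, I would exhibit the right-regular embedding $\iota:\D^{rev}\to\C^*_\D$, $Y\mapsto(-)\otimes Y$. This functor is $\C$-linear by associativity, and $\iota(Y)\circ\iota(Z)=(-)\otimes(Z\otimes Y)=\iota(Z\otimes Y)$ makes it a tensor functor out of $\D^{rev}$. A $\C$-linear natural transformation between $(-)\otimes Y$ and $(-)\otimes Z$ is determined by its component at $I_\D$, giving $\text{Hom}_{\C^*_\D}(\iota(Y),\iota(Z))\cong\text{Hom}_\D(Y,Z)$, so $\iota$ is fully faithful. Thus $\D^{rev}\subseteq\C^*_\D$ is a fusion subcategory with $\text{dim}(\D^{rev})=\text{dim}(\D)$, and Theorem \ref{Lagrange} shows that
\begin{align*}
\frac{\text{dim}(\C)}{\text{dim}(\D)}=\frac{\text{dim}(\C^*_\D)}{\text{dim}(\D^{rev})}
\end{align*}
is an algebraic integer. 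The main obstacle will be the Morita-invariance step $\text{dim}(\C^*_\D)=\text{dim}(\C)$: although classical, it depends on M\"{u}ger's identification of Drinfeld centers of Morita-equivalent fusion categories together with the doubling formula for the center, and the right-regular embedding itself is standard module-category input but deserves careful verification in this setting.
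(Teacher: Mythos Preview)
Your approach is essentially the same as the paper's: both view $\D$ as a $\C$-module category via $F$, embed (a reverse of) $\D$ into the dual $\C^*_\D$, and apply Theorem~\ref{Lagrange}; the paper just packages your right-regular embedding and Morita-invariance computation into citations of \cite[Proposition 5.3]{ENO1} (which gives the injective $F^*:\D^*_\D\to\C^*_\D$, noting $\D^*_\D\cong\D^{rev}$) and \cite[Proposition 9.3.9]{EGNO} (which yields $\dim(\C)/\dim(\D)=\dim(\C^*_\D)/\dim(\D^*_\D)$ directly). One minor point: your pivotalization step is unnecessary, since Theorem~\ref{Lagrange} already applies to arbitrary fusion categories.
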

\begin{proof}Since $F$ is a surjective tensor functor,  we can regard $\D$ as an indecomposable  left $\C$-module category via tensor functor $F$. Let $\C^*_\D$ and $\D_\D^*$ be the    tensor categories of left module functors of $\C$ and $\D$ with respect to module category $\D$, respectively. This is the so-called  exact pair $(F,\D)$, see \cite[$\S 7.17$]{EGNO} and \cite{ENO1} for details.

Therefore, we deduce from \cite[Corollary 7.12.13]{EGNO} that both $\C^*_\D$ and $\D_\D^*$ are fusion categories. Then we  obtain an injective tensor functor $F^*:\D^*_\D\to\C^*_\D$ by \cite[Proposition 5.3]{ENO1}, hence we can identify $\D^*_\D$ as a fusion subcategory of $\C^*_\D$.
It follows from \cite[Proposition 9.3.9]{EGNO} that we have  the following equation
\begin{align}
\frac{\text{dim}(\C)}{\text{dim}(\D)}= \frac{\text{dim}(\C^*_\D)}{\text{dim}(\D_\D^*)},
\end{align}
 which is an algebraic integer by Theorem  \ref{Lagrange}. This completes the proof.
\end{proof}
In Corollary \ref{quotient}, it is easy to see that $\frac{\text{dim}(\C)}{\text{dim}(\D)} \neq\frac{\text{FPdim}(\C)}{\text{FPdim}(\D)}$ in general. For example, let $\D$ be a  (spherical) fusion category such that $\text{dim}(\D)\neq \text{FPdim}(\D)$, and $\C:=\Y(\D)$. Let $F:\Y(\D)\to\D$ be the forgetful tensor functor, so $F$ is surjective.  Then \cite[Theorem 2.15, Proposition 8.12]{ENO1} say that
$\frac{\text{dim}(\C)}{\text{dim}(\D)}=\text{dim}(\D)\neq \text{FPdim}(\D)=\frac{\text{FPdim}(\C)}{\text{FPdim}(\D)}$.

\section{Classification of pre-modular fusion categories}
\subsection{Pre-modular fusion categories of global dimension $7$}\label{subsection4.1}
In this subsection, we show that pre-modular fusion categories of global dimension $7$ are pointed. Spherical fusion categories $\C$ of integer dimension   with $\text{dim}(\C)\leq 5$ were classified completely in \cite[Example 5.1.2]{O3}. Explicitly, $\C$ is pointed,  or $\C$ is tensor equivalent to an Ising category $\I$, or $\C$ is tensor equivalent to the Deligne tensor product   $YL\boxtimes \overline{YL}$.

We first classify pre-modular fusion categories of global dimension $6$.
\begin{prop}\label{predimen6}Let   $\C$ be a pre-modular fusion category of global dimension $6$, then $\C$ is   integral. Thus, $\C$ is either pointed or $\C\cong\text{Rep}(S_3)$, where $S_3$ is the symmetric group of order $6$.
\end{prop}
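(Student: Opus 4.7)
The plan is to reduce the classification by analyzing the M\"uger center $\C'$ of $\C$, to show in each case that $\C$ must be integral, and then to classify integral pre-modular fusion categories of global dimension $6$ directly. Since $\C'$ is symmetric, Deligne's theorem gives $\C'\cong\text{Rep}(G,z)$ for some finite group $G$ and central involution $z\in G$, so $\text{dim}(\C')=|G|$ is a positive integer. Applying Theorem \ref{Lagrange} to the inclusion $\C'\subseteq\C$ forces $|G|\mid 6$, so $\text{dim}(\C')\in\{1,2,3,6\}$.

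When $\text{dim}(\C')=6$, $\C$ is symmetric: the only groups of order $6$ are $\Z_6$ and $S_3$, and since $S_3$ has trivial center, $\C$ must be $\text{Rep}(\Z_6)$, $\text{sVec}\boxtimes\text{Rep}(\Z_3)$, or $\text{Rep}(S_3)$, all integral. When $\text{dim}(\C')\in\{2,3\}$ and $\C'$ is Tannakian, $\C'\cong\text{Rep}(\Z_p)$ for $p=\text{dim}(\C')$; the de-equivariantization $\C_{\Z_p}$ is a braided fusion category of dimension $6/p\leq 3$, hence pointed, and the equivariantization $(\C_{\Z_p})^{\Z_p}\cong\C$ then has integer-dimensional simples because $\Z_p$ is abelian. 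The remaining sub-case is $\C'=\text{sVec}$, in which $\C$ is super-modular: equation \eqref{halfdimen} gives $\sum_{X\in\Pi_0}\text{dim}(X)^2=3$, whose only non-pointed possibility is $\Pi_0=\{I,Y\}$ with $\text{dim}(Y)^2=2$, and this is excluded by Remark \ref{orbsigmaI}, since the non-trivial Galois automorphism $\sqrt{2}\mapsto-\sqrt{2}$ would force $\hat{\sigma}(I)=Y$ with $\text{dim}(\hat{\sigma}(I))^2=2\neq 1$.

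The main obstacle is the modular case $\text{dim}(\C')=1$. Here I would combine the Verlinde identification of formal codegrees as $6/\text{dim}(X)^2$, the lower bound $f_\varphi>\sqrt{4/3}$, the requirement that $6/\text{dim}(X)^2$ be an algebraic integer, and Theorem \ref{Lagrange} applied to $\C_\text{pt}$, to restrict the multiset $\{\text{dim}(X)^2\}$ to a short list of configurations summing to $6$. Each non-pointed configuration is then eliminated: the case $\text{dim}(\C_\text{pt})=4$ with one simple of squared dimension $2$ fails Theorem \ref{Lagrange}; the case of one simple of dimension $\sqrt{3}$ over a $\Z_3$-pointed part contradicts the $S$-matrix row-orthogonality relation $\sum_Z s_{g,Z}\text{dim}(Z)=0$ (the left-hand side equals $1+\omega+\omega^2+3=3$ for a generator $g$); configurations with simples of dimension $\sqrt{2}$ over $\Z_2$ fail a fusion-rule check on $X\otimes X^*$; and the small-rank configurations with irrational dimensions fail because $\text{dim}_{\FC}\text{End}(X)=1$ forces an incompatible decomposition of $X\otimes X^*$.

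Having shown $\C$ integral in every case, I would conclude by listing integral pre-modular fusion categories of global dimension $6$: positivity of $\text{dim}(X)\in\Z$ together with $\sum\text{dim}(X)^2=6$ allows only the partitions $6=1+1+1+1+1+1$ (pointed) and $6=1+1+4$ (rank $3$ with the fusion ring of $\text{Rep}(S_3)$). Since a braided categorification of the latter ring with global dimension $6$ is unique up to equivalence and equals $\text{Rep}(S_3)$, the proof is complete.
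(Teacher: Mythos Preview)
Your overall strategy---casing on the M\"uger center $\C'$ via Theorem \ref{Lagrange}---matches the paper, and your treatment of the symmetric, Tannakian, and super-modular cases is sound (for $\C'\cong\text{sVec}$ you invoke the Galois action of Remark \ref{orbsigmaI}, while the paper instead cites \cite[Corollary 3.4]{Yu} to see that $\tfrac{6}{2\cdot 2}=\tfrac{3}{2}$ is not an algebraic integer; both arguments work).

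The real divergence is the modular case. The paper handles it in one line by appealing to the low-rank classifications \cite{BNRW,RSW}: no modular fusion category of global dimension $6$ exists with $\text{rank}(\C)<6$, hence $\text{rank}(\C)=6$ and $\C$ is pointed by \cite[Remark 4.2.3]{O3}. Your direct elimination is far more laborious and, as sketched, has genuine gaps. You never argue that your four bullet cases exhaust the non-pointed possibilities---for instance the rank-$3$ integral configuration $\dim(X)^2\in\{1,1,4\}$ and the rank-$3$ configuration $\{1,2,3\}$ are absent from your list---and your orthogonality computation $1+\omega+\omega^2+3=3$ presupposes particular values of $s_{g,g^j}$ and $s_{g,X}$ rather than ranging over all admissible bicharacters on $\Z_3$ and all cube-root monodromy scalars. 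Each missing case can in fact be closed, but the argument you outline is not yet a proof; the paper's citation is both shorter and complete.

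Your final sentence is also too strong. A braided categorification of the $\text{Rep}(S_3)$ fusion ring with global dimension $6$ is \emph{not} unique up to braided equivalence: $(\C(\Z_3,\eta))^{\Z_2}$ with the inversion action has this fusion ring but M\"uger center $\text{Rep}(\Z_2)$, so it is not braided equivalent to the symmetric category $\text{Rep}(S_3)$. The paper's ``$\C\cong\text{Rep}(S_3)$'' should be read at the level of fusion categories (cf.\ \cite[Theorem 1.1]{EGO}); your argument in fact only establishes that $\C$ is integral with that fusion ring, which is all the proposition needs.
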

\begin{proof}\cite[Remark 4.2.3]{O3} shows that $\text{rank}(\C)=6$ if and only if $\C$ is pointed.  Obviously, $\C$ is integral if $\C'=\C$. If $\C'$ is a proper symmetric subcategory of $\C$, then $\C'$ is an integral fusion category 
\cite[Corollary 9.9.1]{EGNO}, so $\text{dim}(\C')=1,2,3$ by Theorem \ref{Lagrange}. If $\C'=\text{Rep}(G)$ is Tannakian, then $\C\cong\D^G$, where $\D$ is a modular fusion category with $\dim(\D)=\frac{\dim(\C)}{|G|}$, so $\D\cong\C(\Z_2,\eta_1)$ or $\D\cong\C(\Z_3,\eta_2)$  by \cite[Example 5.1.2]{O3}, thus $\C\cong\text{Rep}(S_3)$ or $\C$ is pointed. If  $\C'\cong \text{sVec}$, then $\text{rank}(\C)=4,6$.  If $\text{rank}(\C)=4$,
 for any simple object $X\in\C$, $\text{dim}(X)^2\in{\{1,2}\}$ by equation \ref{halfdimen}, while  \cite[Corollary 3.4]{Yu} shows $\frac{6}{2\text{dim}(X)^2}$ is an algebraic integer, this is impossible. If $\C$ is modular, conclusions of \cite{BNRW,RSW} imply that there does not exist modular fusion category  with $\text{rank}(\C)<6$ and $\text{dim}(\C)=6$. Therefore,  $\C$ must be an integral fusion category.
\end{proof}

Assume  that $p$ is a prime, and let $\C$ be a pre-modular fusion category of global dimension $p$. Since $\C'$ is a symmetric fusion subcategory of $\C$,  $\text{dim}(\C')=\text{FPdim}(\C')$ is an integer, Theorem \ref{Lagrange} shows that rational number $\frac{p}{\text{dim}(\C')}$ is an algebraic integer. Then $\C$ is symmetric if  $p=\text{dim}(\C')$, otherwise $\C$ is modular. If $\C$ is symmetric, then $\C\cong \text{Rep}(\Z_p)$ or $\C\cong \text{sVec}$ as symmetric fusion category, so $\C$ is a pointed fusion category.

Assume $\C$ is  modular below.  If $p>5$, it follows from \cite{BNRW,RSW} that there do  not exist   modular  fusion categories of global dimension $p$ with rank  less than $6$, so $6\leq \text{rank}(\C)\leq p$ and $\C\cong\C(\Z_p,\eta)$ if $\text{rank}(\C)=p$ by \cite[Lemma 4.2.2, Remark 4.2.3]{O3}.  Let  $\D\subseteq\C$ be an arbitrary fusion subcategory,  then $\D$ is  a modular fusion category, since $\D\cap\D_\C'$ is symmetric and $\text{dim}(\D\cap\D_\C')^2$ divides $p$ by Theorem \ref{Lagrange}. Thus  \cite[Theorem 3.13]{DrGNO2} says that $\C\cong\A_1\boxtimes\cdots\boxtimes\A_s$, where $\A_i$ are simple modular fusion subcategories of $\C$, $1\leq i\leq s<\text{rank}(\C)\leq p$.

\begin{theo}\label{moddimen7}Let $\C$ be a pre-modular fusion category of global dimension $7$. Then $\C$ is pointed. That is, $\C\cong\C(\Z_7,\eta)$ or $\C\cong\text{Rep}(\Z_7)$.
\end{theo}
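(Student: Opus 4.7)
The plan is to reduce to the modular case via M\"{u}ger-center analysis, use the dimension and rank bounds already collected in the discussion preceding the theorem, and then rule out the remaining rank-$6$ modular case by a Galois-orbit and ideal-theoretic argument.

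First, I observe that $\C'$ is a symmetric fusion subcategory, so $\text{dim}(\C')$ is a positive integer, and Theorem~\ref{Lagrange} together with $\text{dim}(\C)=7$ prime forces $\text{dim}(\C')\in\{1,7\}$. In the case $\text{dim}(\C')=7$ one has $\C=\C'$, i.e., $\C$ is itself symmetric; since $\text{dim}(\C)$ is odd, $\C$ cannot contain $\text{sVec}$ as a fusion subcategory, so by Deligne's theorem $\C$ must be Tannakian, whence $\C\cong\text{Rep}(\Z_7)$. So I may assume $\C'=\text{Vec}$, i.e., $\C$ is modular. The paragraph preceding the theorem already gives $6\leq\text{rank}(\C)\leq 7$ and $\C\cong\C(\Z_7,\eta)$ when $\text{rank}(\C)=7$, so everything reduces to excluding $\text{rank}(\C)=6$.

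In that case Theorem~\ref{Lagrange} applied to $\C_{\text{pt}}\subseteq\C$ forces $|\C_{\text{pt}}|\in\{1,7\}$; since $\text{rank}(\C)=6<7$ rules out $|\C_{\text{pt}}|=7$, we have $|\C_{\text{pt}}|=1$. Hence each of the five non-trivial simple objects satisfies $\text{dim}(X)^2>1$, and $\sum_{X\neq I}\text{dim}(X)^2=6$. I would then analyze the $\text{Gal}(\C)$-action on $\Q(\C)\setminus\{I\}$: each Galois orbit $O$ has $\sum_{X\in O}\text{dim}(X)^2\in\Z_{>0}$, and the strict bound $\text{dim}(X)^2>1$ upgrades to $\sum_{X\in O}\text{dim}(X)^2\geq |O|+1$. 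Writing the orbit sizes as $n_1,\dots,n_k$ and the orbit sums as $m_1,\dots,m_k$, the constraints $\sum n_i=5$, $\sum m_i=6$, $m_i\geq n_i+1$ force $k=1$, $n_1=5$, $m_1=6$. Since $5$ is prime, the Galois orbit of $\text{dim}(X_1)^2$ has size $1$ or $5$; size $1$ gives $\text{dim}(X_1)^2\in\Z_{\geq 2}$ and total sum $\geq 10$, so the orbit has size $5$ and $K:=\FQ(\text{dim}(X_1)^2)$ is a cyclic degree-$5$ extension of $\FQ$. By Theorem~\ref{cyclotomic} applied to the character $h_I=\text{dim}$ of $\text{Gr}(\C)$, $K$ embeds into some cyclotomic field $\FQ(\zeta_N)$.

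The key arithmetic step is that $7$ is unramified in every degree-$5$ subfield of every cyclotomic field. Writing $N=7^a m$ with $\gcd(m,7)=1$, the inertia subgroup at $7$ in $\text{Gal}(\FQ(\zeta_N)/\FQ)=(\Z/N)^*$ is $(\Z/7^a)^*$ of order $6\cdot 7^{a-1}$, and its image in the cyclic quotient $\text{Gal}(K/\FQ)\cong\Z/5\Z$ must be trivial since $\gcd(6\cdot 7^{a-1},5)=1$. Hence $(7)$ factors in $\mathcal{O}_K$ as an unramified product of primes permuted transitively by $\text{Gal}(K/\FQ)$, so the only $\text{Gal}(K/\FQ)$-stable divisors of $(7)$ are $(1)$ and $(7)$. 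Since $\text{dim}(X_1)^2$ is a $d$-number strictly greater than $1$ (being the square of the $d$-number $\text{dim}(X_1)$ by \cite[Theorem 1.8]{O1}) that divides $7$ in $\mathcal{O}_K$ (because $7/\text{dim}(X_1)^2$ is an algebraic integer), one is forced to $(\text{dim}(X_1)^2)=(7)$. Taking norms then gives $N_{K/\FQ}(\text{dim}(X_1)^2)=7^5$; on the other hand $N_{K/\FQ}(\text{dim}(X_1)^2)=\prod_{i=1}^{5}\text{dim}(X_i)^2<2^5=32$, because any $\text{dim}(X_i)^2\geq 2$ would leave the other four summing to $\leq 4$ while each is strictly above $1$. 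This contradiction excludes rank $6$ and finishes the argument.

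The main obstacle I expect is the final ideal-theoretic step, which ties together three ingredients simultaneously: the $\text{Gal}(K/\FQ)$-stability of $(\text{dim}(X_1)^2)$ (the $d$-number property), the unramifiedness of $7$ in $K$ (a mod-$5$ computation on inertia), and the norm identity forcing $(\text{dim}(X_1)^2)=(7)$. The remaining steps—reduction to modular, triviality of $\C_{\text{pt}}$, the single Galois orbit of size $5$, and the confinement of each $\text{dim}(X_i)^2$ to $(1,2)$—are more-or-less routine once Theorem~\ref{Lagrange}, Theorem~\ref{cyclotomic}, and the preceding paragraph are in hand.
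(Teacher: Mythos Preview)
There is a genuine gap at the step ``$|\C_{\text{pt}}|=1$, hence each of the five non-trivial simple objects satisfies $\dim(X)^2>1$.'' Triviality of $\C_{\text{pt}}$ only gives $\text{FPdim}(X)>1$; it says nothing about the categorical dimension. In a non-pseudo-unitary category these can behave very differently: in $\overline{YL}$ the non-trivial simple has $\text{FPdim}=\frac{1+\sqrt{5}}{2}>1$ but $\dim^2=\frac{3-\sqrt{5}}{2}<1$. Your entire orbit analysis rests on this inequality---you use it to force $\{I\}$ to be a singleton Galois orbit, to get the bound $m_i\ge n_i+1$, and again in the final norm estimate $\prod\dim(X_i)^2<32$. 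Without it all three steps collapse. In fact the orbit of $I$ is \emph{never} a singleton here: if it were, every $\dim(X)$ would be a rational integer, and then $6=\sum_{i=1}^{5}\dim(X_i)^2$ with each term a positive perfect square has no solution. So the five non-trivial simples do not form a single Galois orbit of size $5$, and the degree-$5$ field $K$ on which your ideal-theoretic endgame depends never arises.

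The paper pins down the orbit sizes by a different mechanism: the Cauchy theorem forces $\FQ(S_\C)\subseteq\FQ(\zeta_{7^n})$, so $\text{Gal}(\C)$ sits inside $\Z_2\times\Z_3\times\Z_{7^{n-1}}$, and since $\dim(-)$ and $\text{FPdim}(-)$ take values in the totally real subfield and lie in distinct orbits (as $\C$ is not pseudo-unitary), rank $6$ forces each orbit to have size exactly~$3$. The formal codegrees are then $7,7,7,f_1,f_2,f_3$ with the $f_i$ conjugate to $\text{FPdim}(\C)$; the class equation $\tfrac{3}{7}+\sum\tfrac{1}{f_i}=1$ together with $f_1f_2f_3\mid 7^3$ and the $d$-number and cyclotomic tests finishes the job. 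Your unramifiedness-of-$7$ argument is correct and elegant on its own terms, but it is aimed at a degree-$5$ field that the actual Galois structure does not produce.
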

\begin{proof}If $\C'$ is non-trivial, then  $p=\text{dim}(\C')$ by Theorem \ref{Lagrange} and $\C\cong\text{Rep}(\Z_7)$. Assume that $\C$ is modular and that $\C$ is not pointed below, in particular, $\C$ is not pseudo-unitary, then previously argument says  $\C$ is a simple modular fusion category of rank $6$. Meanwhile, Cauchy Theorem \cite{BNRW,DongLinNg} says $\FQ(S_\C)\subseteq\FQ(T_\C)=\FQ(\zeta_{7^n})$ for some integer $n\geq1$, where $\zeta_{7^n}$ is a primitive $7^n$-th root of unity, we know
\begin{align}
\text{Gal}(\FQ(T_\C)/\FQ)\cong\Z_2\times\Z_3\times\Z_{7^{n-1}}.
  \end{align}
Since homomorphisms  $\text{dim}(-)$ and $\text{FPdim}(-)$ take values in the maximal totally real subfield $\FQ(\zeta_{7^n}+\zeta^{-1}_{7^n})$, they  are not in the same orbits under the action of the Galois group \begin{align}
\text{Gal}(\FQ(\zeta_{7^n}+\zeta^{-1}_{7^n})/\FQ)\cong\Z_3\times\Z_{7^{n-1}},
 \end{align}
for $\C$ is not a pseudo-unitary fusion category. Meanwhile,  $\text{rank}(\C)=6$,  so each orbit of $\text{dim}(-)$ and $\text{FPdim}(-)$ has  exactly three homomorphisms.

Let $f_1\leq f_2\leq f_3=\text{FPdim}(\C)$ be  distinct Galois conjugates of $\text{FPdim}(\C)$, so formal codegrees of $\C$ are $7,7,7,f_1,f_2,f_3$. By equation \ref{classequation}, we have
\begin{align}
\frac{3}{7}+\frac{1}{f_1}+\frac{1}{f_2}+\frac{1}{f_3}=1,
\end{align}
so, $f_1>\frac{7}{4}$, $f_2>\frac{7}{2}$ and $f_3>7$. Again, $f_1f_2f_3|7^3$, so $f_1f_2f_3=49$ or  $f_1f_2f_3=343$. If $f_1f_2f_3=49$, then $d$-number test \cite[Lemma 2.7]{O1} shows that $f_1, f_2,f_3$ are roots of equation $x^3-14x^2+28x-49=0$, which fails to satisfy cyclotomic test in Theorem \ref{cyclotomic}; and similarly, if  $f_1f_2f_3=343$, then  $f_1, f_2,f_3$ are roots of equation $x^3-\alpha x^2+196x-343=0$, where $\alpha=f_1+f_2+f_3$. Obviously, $d$-number test \cite[Lemma 2.7]{O1} shows that $7|\alpha$, meanwhile
\begin{align}
\alpha<(f_1-\frac{3}{4})f_2+(f_3-6)f_1+(f_2-\frac{5}{2})f_3< 196-\frac{3}{2}\alpha,
\end{align}
then $14\leq\alpha\leq78$. However,  a direct computation shows that  there is no solution for cyclotomic test in Theorem \ref{cyclotomic} if $\alpha\neq 28$. While when $\alpha=28$, roots of equation $x^3-28 x^2+196x-343=0$ are equal to $7,\frac{21\pm7\sqrt{5}}{2}$, this is impossible as $\sqrt{5}\notin\FQ(\zeta_{7^n}+\zeta^{-1}_{7^n})$.

In summary, pre-modular fusion categories of global dimension $7$ are pointed.
\end{proof}

\begin{ques}Let $p$ be a prime, is pre-modular fusion category of  dimension $p$   pointed or  braided tensor equivalent to Deligne tensor product $YL\boxtimes \overline{YL}$\footnote{Recently, a positive answer was given by \cite{Sch}.}?
\end{ques}
\subsection{Pre-modular fusion categories of global dimensions $8$, $9$ and $10$}\label{subsection4.2}
In this subsection,   we give a complete classification of pre-modular fusion categories of global dimensions $8$, $9$ and $10$ respectively.

Weakly integral braided fusion categories of dimension $8$ were classified in \cite[Remark 4.7]{Yu}.
\begin{lemm}\label{superdim8}Let $\C$  be a super-modular fusion category of global dimension $8$, then $\C$ is  weakly integral. Thus $\C\cong\text{sVec}\boxtimes\D$ as super-modular fusion category, where $\D$ is either pointed or $\D\cong\I$ is an Ising category.
\end{lemm}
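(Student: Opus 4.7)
The plan is to prove that $\C$ is weakly integral by analyzing the multiset of quantum dimensions in $\Pi_{0}$ via equation (\ref{halfdimen}), and then to invoke \cite[Remark 4.7]{Yu}, which classifies all weakly integral braided fusion categories of global dimension $8$.

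First, I would exploit equation (\ref{halfdimen}) to obtain $\sum_{X\in\Pi_{0}}\dim(X)^{2}=4$. Since $I$ contributes $1$ and each other $\dim(X)$ is a positive real $d$-number whose Galois conjugates under $\text{Gal}(\FQ(\widehat{S})/\FQ)$ must again be realized as positive quantum dimensions in $\C$, a direct enumeration shows that the admissible multisets of quantum dimensions in $\Pi_{0}$ are $\{1,1,1,1\}$, $\{1,1,\sqrt{2}\}$, $\{1,\sqrt{3}\}$, and $\{1,\phi,\phi^{-1}\}$ with $\phi=(1+\sqrt{5})/2$.

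Second, choosing $\Pi_{0}$ to be closed under duality (so that the unique simple of each irrational quantum dimension in $\Pi_{0}$ is self-dual), I would rule out the last two multisets by short fusion-rule arguments. For $\{1,\sqrt{3}\}$, writing $X\otimes X=aI+b\chi+cX+d\chi X$ and using $\FQ$-linear independence of $1$ and $\sqrt{3}$ forces $c=d=0$ and $a+b=3$; simplicity of $X$ yields $a=1$, but then $b=2$ is incompatible with $\chi\otimes X\not\cong X$ (which holds because $\Pi_{0}\cap\Pi_{1}=\emptyset$). For $\{1,\phi,\phi^{-1}\}$, the product $X_{\phi}\otimes X_{\phi^{-1}}$ has quantum dimension $1$, so must equal $I$ or $\chi$; the first option forces $X_{\phi^{-1}}\cong X_{\phi}^{*}\cong X_{\phi}$, contradicting the dimension mismatch, while the second places $X_{\phi^{-1}}\cong\chi\otimes X_{\phi}$ into $\Pi_{1}$, contradicting $X_{\phi^{-1}}\in\Pi_{0}$.

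Third, in the two surviving cases $\{1,1,1,1\}$ and $\{1,1,\sqrt{2}\}$, every simple object of $\C$ satisfies $\dim(X)^{2}\in\Z$, whence $\text{FPdim}(X)^{2}\in\Z$ (as a Galois conjugate of an integer) and $\text{FPdim}(\C)\in\Z$; thus $\C$ is weakly integral. Invoking \cite[Remark 4.7]{Yu} and restricting to those categories whose M\"uger center equals $\text{sVec}$ yields the decomposition $\C\cong\text{sVec}\boxtimes\D$ with $\D$ pointed modular of dimension $4$ or $\D\cong\I$. The main obstacle is the Yang--Lee-type case $\{1,\phi,\phi^{-1}\}$ in the second step: total positivity under the Galois action alone does not forbid $\dim(X)=\phi$, since the permutation $\hat{\sigma}$ of $\Pi_{0}$ may fix such an object even when the underlying Galois automorphism of $\FQ(\widehat{S})$ sends $\phi$ to the negative quantity $1-\phi$; the self-duality/fusion-rule argument above is, however, tight enough to eliminate this case.
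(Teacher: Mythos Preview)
Your overall strategy---enumerate the possible multisets of $\dim(X)^2$ in $\Pi_0$, eliminate the spurious ones by fusion-rule contradictions, then invoke \cite[Remark 4.7]{Yu}---is sound and reaches the correct conclusion, but it differs from the paper's route and has a few soft spots worth flagging.

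\textbf{Comparison with the paper.} The paper does not enumerate the cases $\{1,\sqrt{3}\}$ and $\{1,\phi,\phi^{-1}\}$ at all. Instead it uses \cite[Corollary 3.4]{Yu} (that $4/\dim(X)^2$ is an algebraic integer) to kill rank $4$ immediately, and for rank $6$ it applies Remark~\ref{orbsigmaI}: if $\C$ is not integral then some $\hat\sigma(I)=X_1\neq I$, forcing $\dim(X_1)^2=1$ and hence $\dim(X_2)^2=2$. This bypasses the Yang--Lee case entirely. The paper then computes $\hat S$ explicitly and reads off an Ising fusion subring, concluding $\C\cong\I\boxtimes\text{sVec}$ directly rather than via the blanket classification of \cite[Remark 4.7]{Yu}. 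Your approach is more elementary in that it avoids the Galois-orbit machinery of Remark~\ref{orbsigmaI}, trading it for two short fusion-rule contradictions; the paper's approach is shorter and more structural.

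\textbf{Gaps to patch.} First, your assertion that each $\dim(X)$ is \emph{positive} is unjustified (spherical quantum dimensions can be negative); fortunately only $\dim(X)^2$ matters for the enumeration, and your fusion-rule arguments survive sign changes. Second, ``Galois conjugates of $\dim(X)$ are realized as quantum dimensions in $\C$'' is not literally what Remark~\ref{orbsigmaI} says: $\sigma(\dim(X))=h_{\hat\sigma(I)}(X)$, not $\dim(Y)$ for some $Y$. What is true, and suffices for your enumeration, is that the multiset $\{4/\dim(X)^2:X\in\Pi_0\}$ is Galois-stable (these are the ``formal codegrees'' coming from orthogonality of $\hat S$), hence so is $\{\dim(X)^2\}$; you should state it this way, or simply cite \cite[Corollary 3.4]{Yu} to exclude $\dim(X)^2=3$ directly. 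Third, ``choosing $\Pi_0$ closed under duality'' can fail precisely when $X^*\cong\chi\otimes X$; in that situation your $\{1,\phi,\phi^{-1}\}$ argument still goes through (from $X_\phi\otimes X_{\phi^{-1}}\cong\chi$ one gets $X_{\phi^{-1}}\cong X_\phi^*\otimes\chi$, and comparing dimensions gives $\phi^{-1}=\phi$, a contradiction), but you should not silently assume self-duality. Finally, make the rank bound $|\Pi_0|\leq 4$ explicit by citing \cite[Lemma 4.2.2]{O3} and \cite[Lemma 3.28]{DrGNO2}.
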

\begin{proof}Let $\C$ be a super-modular fusion category of global dimension $8$, then \cite[Lemma 3.28]{DrGNO2} implies that $\text{rank}(\C)$ is even, and $\text{rank}(\C)\leq8$ by \cite[Lemma 4.2.2]{O3}. Assume that $\Pi_0={\{I,X_1,\cdots,X_n}\}$, hence $n=2,3$. In addition, $4=1+d_{X_1}^2+\cdots +d_{X_n}^2$, where $d_{X_i}:=\text{dim}(X_i)$  for any $1\leq i\leq n$, and   $d_{X_i}^2$ divides $4$ by
\cite[Corollary 3.4]{Yu}. We deduce from
\cite[Remark 4.2.3]{O3} that $\C$ is pointed if $n=3$.

If $n=2$, we consider the induced action of $\text{Gal}(\C)$ on $\Pi_0$,   $d_{\hat{\sigma}(I)}^2=1$ by Remark \ref{orbsigmaI}. If $\hat{\sigma}(I)=I$ for all $\sigma$, then $\C$ is integral by Remark \ref{orbsigmaI}. However, there does not exist non-pointed integral super-modular fusion categories which have FP-dimension $8$ \cite[Remark 4.7]{Yu}.

Hence there exists a $\sigma\in \text{Gal}(\C)$ such that $\hat{\sigma}(I)\ncong I$, assume that $\hat{\sigma}(I)=X_1$ and $d_{X_2}^2=2$. Then it is easy to see that
\begin{align*}
\hat{S}=\left(
          \begin{array}{ccc}
            1 & 1 & d_{X_2} \\
            1 & 1 & -d_{X_2} \\
           d_{X_2} & -d_{X_2} & 0 \\
          \end{array}
        \right),
\end{align*}
we can obtain the naive fusion rules from matrix $\hat{S}$   \cite[Proposition 2.7]{BGNPRW}. Moreover it is easy to see that $\C$ contains  a fusion subcategory with fusion rules as Ising category $\I$, see also \cite[Lemma 3.8]{BGNPRW}. Thus $\C\cong\I\boxtimes\text{sVec}$ as $\I$ is non-degenerate \cite[Corollary B.12]{DrGNO2}, $\C$ is weakly integral.
\end{proof}
\begin{remk}\label{totallypost}Let $\C$ be a fusion category such that $\text{FPdim}(-)$ takes values in $\FQ(\sqrt{n})$, where $n$ is a non-negative integer. Let $\Q(\C)={\{X_i|i=0,\cdots,t}\}$ with $X_0:=I$, $\text{FPdim}(X_i)^2=\frac{a_i+b_i\sqrt{n}}{2}$, where $a_i,b_i$ are  rational numbers, $1\leq\forall i\leq t$.
\begin{align}
\text{FPdim}(\C)=\sum_{i=0}^t\text{FPdim}(X_i)^2=1+\sum^t_{i=1}\frac{a_i+b_i\sqrt{n}}{2},
 \end{align}
  If $\text{FPidm}(\C)\notin\Z$, there exists an $i_0$ such that $a_{i_0}$ and $b_{i_0}$ are positive integers. Indeed,  by assumption   $b_{i_0}\neq0$ for some $i_0$.  If $b_{i_0}<0$, then $\frac{a_{i_0}-b_{i_0}\sqrt{n}}{2}>\frac{a_{i_0}+b_{i_0}\sqrt{n}}{2}$, this contradicts to property of FP-dimension of simple object \cite[Proposition 3.3.4]{EGNO}. If  $a_{i_0}\leq0$, then   $\frac{a_{i_0}+b_{i_0}\sqrt{n}}{2}$ can not  a totally positive algebraic integer.

Next, we show $a_{i_0}$ and $b_{i_0}$ are integers. Let $\alpha_{i_0}:=\frac{a_{i_0}+b_{i_0}\sqrt{n}}{2}$, then algebraic integer $\alpha_{i_0}$ is a root  of equation  $4x^2-4a_{i_0}x+a_{i_0}^2-nb_{i_0}^2=0$. Hence, $a_{i_0}\in\Z$. Since $\frac{a_{i_0}+b_{i_0}\sqrt{n}}{2}$ is a totally positive algebraic integer, $\frac{a_{i_0}-b_{i_0}\sqrt{n}}{2}$ is also positive. Let $4m=a_{i_0}^2-nb_{i_0}^2$, where $m$ is a positive integer, then $b_{i_0}=\sqrt{\frac{a_{i_0}^2-4m}{n}} $ is a rational number, which means $b_{i_0}$ is an integer.
\end{remk}
Let  $\C$ be a modular fusion category and denote $\text{Gal}(\C):=\text{Gal}(\FQ(S_\C)/\FQ)$. Recall that  given an arbitrary $\sigma\in\text{Gal}(\C)$, there exists a unique object $\hat{\sigma}(X)\in\Q(\C)$ such that
\begin{align}
\sigma(s_{X,Y}/\text{dim}(X))=s_{\hat{\sigma}(X),Y}/\text{dim}(\hat{\sigma}(X)), ~\forall X,Y\in\Q(\C).
\end{align}
\begin{theo}\label{moddimen8}Let  $\C$ be a modular fusion category of global dimension $8$, then $\C$ is not simple.
\end{theo}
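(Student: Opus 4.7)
I would argue by contradiction: assume $\C$ is a simple modular fusion category with $\text{dim}(\C)=8$ and aim for a contradiction. The overall strategy is to reduce to the pseudo-unitary case and then finish with a short parity argument on Frobenius--Perron dimensions.

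Since $\C_\text{pt}$ is a fusion subcategory, simplicity forces $\C_\text{pt}\in\{\text{Vec},\C\}$. If $\C_\text{pt}=\C$, then $\C\cong\C(G,q)$ for a metric group with $|G|=8$, and any proper subgroup $H\subsetneq G$ (which exists since $|G|=8$) supplies the proper fusion subcategory $\C(H,q|_H)$, contradicting simplicity. So $\C_\text{pt}=\text{Vec}$, and in particular $\text{rank}(\C)\leq 7$ by \cite[Remark 4.2.3]{O3}. Next I would show $\C$ is pseudo-unitary. When $\C$ is weakly integral this is immediate: $8/\text{FPdim}(\C)$ is a positive rational algebraic integer $\leq 1$, hence equal to $1$, so $\text{FPdim}(\C)=8$. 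When $\C$ is not weakly integral, the Cauchy theorem \cite{BNRW,DongLinNg} gives $\FQ(T_\C)=\FQ(\zeta_{2^n})$ for some $n\geq 1$, so $\text{Gal}(\FQ(S_\C)/\FQ)$ is a $2$-group and the formal codegrees $\{8/\text{dim}(X)^2:X\in\Q(\C)\}$ all lie in the maximal real subfield $\FQ(\zeta_{2^n}+\zeta_{2^n}^{-1})$, with the rational codegree $8$ (from $h_I$) and the strictly larger codegree $\text{FPdim}(\C)$ sitting in distinct Galois orbits. Following the template of Theorem \ref{moddimen7}, I would then impose the class equation $\sum_\varphi\varphi(1)/f_\varphi=1$ together with the $d$-number and cyclotomic tests (Theorem \ref{cyclotomic}) on these orbits and eliminate every admissible configuration of orbit sizes and minimal polynomials. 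I expect this Galois/arithmetic step to be the main obstacle.

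Once pseudo-unitarity is in hand, $\text{FPdim}(\C)=8$ and $\C$ is weakly integral, so $\text{FPdim}(X)^2\in\Z$ for every simple $X$ and $\text{FPdim}(X)^2$ divides $8$ in $\Z$, giving $\text{FPdim}(X)^2\in\{1,2,4,8\}$. Since a simple object has FP-dimension $1$ if and only if it is invertible, the assumption $\C_\text{pt}=\text{Vec}$ forces $\text{FPdim}(X)^2\in\{2,4,8\}$ for every non-unit $X$. But then $\sum_{X\ne I}\text{FPdim}(X)^2=\text{FPdim}(\C)-1=7$ would write the odd integer $7$ as a sum of even integers, which is impossible---the desired contradiction. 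In summary, the bulk of the work is the reduction to pseudo-unitarity (the non-weakly-integral sub-case); once that is in place, the parity argument finishes the proof in one line.
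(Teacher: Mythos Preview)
Your overall architecture matches the paper's: assume $\C$ simple, force $\C_\text{pt}=\text{Vec}$, then split into the weakly integral and non--weakly integral sub-cases. Your parity endgame for the weakly integral case is correct and is a pleasant self-contained replacement for the paper's citation of \cite[Remark 4.7]{Yu}: once $\text{FPdim}(\C)=8$ and $\C_\text{pt}=\text{Vec}$, the equation $7=\sum_{X\ne I}\text{FPdim}(X)^2$ with each summand in $\{2,4,8\}$ is an immediate parity contradiction.

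The gap is in the non--weakly integral sub-case. You propose to ``follow the template of Theorem~\ref{moddimen7}'' using only the class equation, the $d$-number test, and the cyclotomic test on the Galois orbits of formal codegrees. That template is too coarse here. In the dimension $7$ proof the rank is pinned at $6$ and the orbit sizes of $\text{dim}(-)$ and $\text{FPdim}(-)$ are both forced to be $3$, so everything reduces to a single cubic in the conjugates of $\text{FPdim}(\C)$. For dimension $8$ one has $\text{rank}(\C)\in\{6,7\}$, orbit sizes are powers of $2$, and several configurations survive the formal-codegree constraints: in particular $\text{FPdim}(\C)=8+4\sqrt{2}$ passes the class equation, the $d$-number test, and the cyclotomic test. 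The paper eliminates the remaining cases with tools that go beyond your list: (i) when $\text{dim}(-)$ has two conjugates, the contradiction comes from $\dim(X)^2=2-\sqrt{2}$ forcing $\dim(X)\notin\FQ(\sqrt{2})$, a statement about the \emph{values} of $\dim$, not about codegrees; (ii) when $\text{dim}(-)$ has four conjugates, the paper uses orthogonality of rows of the $S$-matrix to pin down $d_{X_4},d_{X_5}$; (iii) when $\text{FPdim}(-)$ has four conjugates, the paper invokes the Galois grading of \cite[Proposition 1.8]{GS} to produce a non-trivial subcategory directly. None of these three ingredients appears in the Theorem~\ref{moddimen7} argument, and without them the case analysis does not close. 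So your plan is on the right track, but you should expect to need $S$-matrix orthogonality and the Gannon--Schopieray grading, not just the codegree arithmetic.
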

 \begin{proof}On the contrary, assume that $\C$ is simple,  thus  $\C_\text{int}=\text{Vec}$ and $\text{FPdim}(\C)\notin\Z$ by \cite[Remark 4.7]{Yu}. In addition,  $\text{rank}(\C)=6,7$ \cite{BNRW,O3}. We only give a proof  for $\text{rank}(\C)=6$, the case when $\text{rank}(\C)=7$ can be proved similarly. By \cite[Corollary 8.18.2]{EGNO}  $\text{ord}(T_\C)$ divides $8^{\frac{5}{2}}$, so  $\FQ(S_\C)\subseteq\FQ(T_\C)=\FQ(\zeta_{2^n})$ for some $ n\leq7$ \cite{DongLinNg}. While $\FQ(\zeta_2)=\FQ$, $\FQ(\zeta_4)=\FQ(\text{i})$ where $\text{i}^2=-1$, and $\text{FPdim}(\C)$ is a totally positive algebraic integer, thus $n\geq3$. Note that $\text{Gal}(\FQ(\zeta_{2^n})/\FQ)=\Z_2\times\Z_{2^{n-2}}$, hence the number of Galois conjugates for any homomorphism $\phi:\text{Gr}(\C)\to\FC$ is power of $2$. Let $\Gamma:={\{\hat{\sigma}(I)\in\Q(\C)|\sigma\in \text{Gal}(\C)}\}$.
  Since $\C$ is not weakly integral, the number of Galois conjugates of $\text{dim}(-)$ are $2$ or $4$, equivalently $|\Gamma|=2,4$.

Case (1): If  $\text{dim}(-)$ and $\text{FPdim}(-)$ each have exactly $2$ Galois conjugates, so they take values in $\FQ(\sqrt{2})$. Let $f_1,f_2=\text{FPdim}(\C)$ be conjugated formal codegrees of $\C$, then $16\leq f_1f_2\leq64$ and $f_1f_2|64$. Since $f_1$ and $f_2$ are roots of $x^2-\beta x+f_1f_2=0$, $d$-number test says that $f_1f_2|\beta^2$ \cite[Proposition 2.7]{O1}. It is easy to show that  $\FQ(f_2)=\FQ(\sqrt{2})$  if and only if for equation $x^2-16x+32=0$, in this case $f_1=8-4\sqrt{2}$ and $\text{FPdim}(\C)=8+4\sqrt{2}$. Thus $\C$ contains a simple object $X$ such that $\dim(X)^2=\frac{\dim(\C)}{\text{FPdim}(\C)}=2-\sqrt{2}$,   it is easy to see $\dim(X)\notin\FQ(\sqrt{2})$, which is contradicting to the assumption  that homomorphism $\text{dim}(-)$   takes values in $\FQ(\sqrt{2})$.

Case (2): If  $\text{dim}(-)$ has $4$ Galois conjugates, then $\Q(\C)=\Gamma\cup{\{X_4,X_5}\}$, where $\Gamma={\{I,X_1,X_2,X_3}\}$, homomorphism $\text{FPdim}(-)$ and its Galois conjugate are determined by objects $X_4$ and $X_5$, respectively.
Let $\sigma\in\text{Gal}(\C)$ such that $\hat{\sigma}(I)=X_1$, then $\hat{\sigma}(X_4)=X_5$ or $\hat{\sigma}(X_4)=X_4$.  By definition $\sigma(d_Y)=\sigma(s_{I,Y}/d_I)=s_{X_1,Y}/d_{X_1}$ for all $Y\in\Q(\C)$, and  $d_{X_i}^2=1$ for $1\leq i\leq3$. Hence, $s_{X_1,Y}=\sigma(d_Y)d_{X_1}$. Orthogonality of $s_{I,-}$ and $s_{X_1,-}$ shows that
\begin{align}
4+d_{X_4}\sigma(d_{X_4})+d_{X_5}\sigma(d_{X_5})=0;
\end{align}
 and $4+d_{X_4}^2+d_{X_5}^2=8=\text{dim}(\C)$. If $\hat{\sigma}(X_4)=X_4$ and $\hat{\sigma}(X_5)=X_5$, then \begin{align}
\frac{8}{\sigma(d_{X_4}^2)}
=\sum_{Y\in\Q(\C)}\sigma(\frac{s_{X_4,Y}s_{X^*_{4},Y}}{d^2_{X_4}})
=\sum_{Y\in\Q(\C)}\frac{s_{X_4,Y}s_{X^*_{4},Y}}{d^2_{X_4}}=\frac{8}{d_{X_4}^2},
 \end{align}
 hence $\sigma(d_{X_4}^2)=d_{X_4}^2$ and $\sigma(d_{X_5}^2)=d_{X_5}^2$.

 Notice that $\frac{8}{d_{X_4}^2}$ and $\frac{8}{d_{X_5}^2}$ are conjugated formal codegrees of $\C$, so $ \frac{64}{d_{X_4}^2d_{X_5}^2} \in\Z$. Obviously, $\sigma(d_{X_4})=d_{X_4}$ and $\sigma(d_{X_5})=d_{X_5}$ can not be true at the same time; and if $\sigma(d_{X_4})=-d_{X_4}$ and $\sigma(d_{X_5})=d_{X_5}$, then we obtain $d_{X_5}^2=0$, impossible. Hence, $\sigma(d_{X_4})=-d_{X_4}$ and $\sigma(d_{X_5})=-d_{X_5}$, thus  $4=d_{X_4}^2+d_{X_5}^2$, $d_{X_4}^2$ and $d_{X_5}^2$ are roots of $x^2-4x+d_{X_4}^2d_{X_5}^2=0$, then $d_{X_4}^2d_{X_5}^2\leq4$.
 If $d_{X_4}^2d_{X_5}^2=4$, then $d_{X_4}^2=d_{X_5}^2=2$ and $\text{FPdim}(\C)=4$,  contradiction; if $d_{X_4}^2d_{X_5}^2=1$, then $d_{X_4}^2=2\pm\sqrt{3}\notin\FQ(\zeta_{2^n})$,  impossible. So  $d_{X_4}^2d_{X_5}^2=2$, consequently $\text{FPdim}(\C)=\frac{8}{2-\sqrt{2}}=8+4\sqrt{2}$, previous argument shows that it is impossible.
If $\hat{\sigma}(X_4)=X_5$ and $\hat{\sigma}(X_5)=X_4$, then $\sigma(d_{X_4}d_{X_5})=d_{X_4}d_{X_5}$   as $4+d_{X_4}\sigma(d_{X_4})+d_{X_5}\sigma(d_{X_5})=0$. If $\sigma(d_{X_4})=d_{X_5}$ and $\sigma(d_{X_5})=d_{X_4}$, then $d_{X_4}=-d_{X_5}$ and $d_{X_4}^2=2$; if $\sigma(d_{X_4})=-d_{X_5}$ and $\sigma(d_{X_5})=-d_{X_4}$, then $d_{X_4}=d_{X_5}$ and $d_{X_4}^2=2$. From both cases we obtain that $\text{FPdim}(\C)=4$, it is a contradiction.

Case (3): If homomorphism $\text{FPdim}(-)$ has 4 Galois conjugates and homomorphism $\text{dim}(-)$ has 2 Galois conjugates, then $\text{dim}(-)$ takes values in $\FQ(\sqrt{2})$, since $\FQ(\zeta_{2^7}+\zeta^{-1}_{2^7})$ has a unique quadratic subfield  $\FQ(\sqrt{2})$, so  $\text{FPdim}(\C)=\frac{10}{\text{dim}(Z_0)^2}\in\FQ(\sqrt{2})$ for some simple object $Z_0\in\C$. Let $\mathbb{K}:=\FQ(\text{FPdim}(X): \forall X\in\Q(\C))$, then \cite[Proposition 1.8]{GS} says that $\C$ is faithfully graded by the Galois group $\text{Gal}(\mathbb{K}/\FQ(\text{FPdim}(\C))$.  However,  $\C$ is assumed to be simple, the Galois group $\text{Gal}(\mathbb{K}/\FQ(\text{FPdim}(\C))$ must be trivial, thus   $\text{FPdim}(-)$ has at most two Galois conjugates, it is impossible.
In summary,  $\C$ is not a simple modular fusion category.
\end{proof}

\begin{coro}\label{predimen8}Let $\C$ be a pre-modular fusion category of global dimension $8$. Then $\C$ is   weakly integral. In particular, $\C$ is either pointed or $\C\cong\I\boxtimes\C(\Z_2,\eta)$ if $\C$ is modular.
\end{coro}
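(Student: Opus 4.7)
The plan is to split the analysis according to the Müger center $\C'\subseteq\C$. By Theorem \ref{Lagrange} applied to $\C'\subseteq\C$, one has $\dim(\C')\mid 8$, so $\dim(\C')\in\{1,2,4,8\}$, and I will treat each case separately.

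If $\dim(\C')=8$, then $\C=\C'$ is symmetric, hence super-Tannakian by Deligne's theorem and in particular integral. If $\C'\cong\text{sVec}$, then $\C$ is super-modular of global dimension $8$ and Lemma \ref{superdim8} applies directly, giving weak integrality together with the decomposition $\C\cong\text{sVec}\boxtimes\D$. In the remaining non-modular cases $\dim(\C')\in\{2,4\}$ with $\C'\ne\text{sVec}$, the Müger center contains a nontrivial Tannakian subcategory $\text{Rep}(G)\subseteq\C'$. I would de-equivariantize to obtain a braided fusion category $\C_G$ of global dimension $8/|G|\in\{2,4\}$. By Ostrik's classification of spherical fusion categories of integer global dimension at most $5$, $\C_G$ is pointed or Ising, both weakly integral, and since $\text{FPdim}(\C)=|G|\cdot\text{FPdim}(\C_G)$, the equivariantization $\C\cong(\C_G)^G$ is weakly integral as well.

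It remains to handle the modular case, $\C'=\text{Vec}$. Theorem \ref{moddimen8} gives that $\C$ is not simple, so $\C$ contains a proper nontrivial fusion subcategory $\D$ with $\dim(\D)\in\{2,4\}$ by Theorem \ref{Lagrange}. My strategy is to extract a modular proper subcategory. Any rank-$2$ subcategory $\D$ is pointed and is one of $\C(\Z_2,\eta)$ (modular), $\text{Rep}(\Z_2)$, or $\text{sVec}$; in the last two symmetric cases, the centralizer $\D'_\C$ has dimension $4$ with Müger center precisely $\D$, and de-equivariantizing $\D'_\C$ by $\D$ shows that $\D'_\C$ is a pointed pre-modular category of dimension $4$ from which a genuine modular rank-$2$ subcategory can be read off. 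Once a modular $\D\cong\C(\Z_2,\eta)$ is secured, Müger's centralizer theorem gives $\C\cong\D\boxtimes\D'_\C$ with $\D'_\C$ modular of dimension $4$; the classification of modular categories of dimension at most $4$ then forces $\D'_\C$ to be pointed or Ising, so $\C$ is either pointed or $\C\cong\I\boxtimes\C(\Z_2,\eta)$.

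The main obstacle is ensuring that a non-simple modular category of global dimension $8$ always contains a modular rank-$2$ subcategory, allowing Müger's decomposition to apply; this requires the centralizer analysis in the symmetric-subcategory subcases above, and ultimately rests on Theorem \ref{moddimen8} to exclude the simple modular case.
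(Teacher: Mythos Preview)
Your overall organization is close to the paper's, but the modular case contains a genuine gap.

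First, one cannot ``de-equivariantize $\D'_\C$ by $\D$'' when $\D=\text{sVec}$: de-equivariantization requires a Tannakian subcategory. Your conclusion that $\D'_\C$ is pointed of dimension~$4$ is nevertheless correct, since a super-modular category of dimension~$4$ has $|\Pi_0|=2$ with both objects of squared dimension~$1$ by equation~\eqref{halfdimen}; indeed $\D'_\C\cong\text{sVec}\boxtimes\C(\Z_2,\eta)$, and this does furnish a modular $\C(\Z_2,\eta)\subseteq\C$. So this subcase is salvageable, just not for the reason you give.

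Second, and more seriously, your claim that a pointed pre-modular $\D'_\C$ of dimension~$4$ with M\"uger center $\text{Rep}(\Z_2)$ must contain a modular rank-$2$ subcategory is false. Take $\C=\C(\Z_8,q)$ with $q(k)=\zeta_{16}^{k^2}$: this is pointed modular of dimension~$8$, its only rank-$2$ subcategory is $\langle 4\rangle$, which is Tannakian since $q(4)=1$, and the centralizer $\langle 2\rangle\cong\Z_4$ has $\langle 4\rangle$ as its unique nontrivial proper subcategory. Thus $\C(\Z_8,q)$ admits \emph{no} proper nontrivial modular subcategory whatsoever, and your M\"uger-splitting strategy cannot be executed.

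The paper sidesteps this by disposing of \emph{every} nontrivial Tannakian subcategory $\text{Rep}(G)\subseteq\C$ at the outset---not only those contained in $\C'$. One then has $\C\cong\D^G$ as fusion categories with $\dim(\D)=8/|G|\le4$, so $\D$ is weakly integral by \cite[Example~5.1.2]{O3} and hence so is $\C$ by \cite[Corollary~4.27]{DrGNO2}. Only after this reduction does the paper enter the modular case, where now $\A\cap\A'_\C\in\{\text{Vec},\text{sVec}\}$ for any nontrivial $\A$, and both alternatives yield a modular $\C(\Z_2,\eta)\subseteq\C$ as above. Your argument is easily repaired along the same lines: handle Tannakian subcategories of $\C$ (not just of $\C'$) before the case split.
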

\begin{proof} If $\C$ contains a non-trivial Tannakian subcategory $\text{Rep}(G)$, then $\C\cong\D^G$ as fusion category,  $\text{dim}(\D)|G|=\text{dim}(\C)$ by \cite[Proposition 4.26]{DrGNO2}. Then $\D$ is weakly integral \cite[Example 5.1.2]{O3}, so is $\C$ by \cite[Corollary 4.27]{DrGNO2}. We  assume $\C$ doesn't contain non-trivial Tannakian subcategory below. Then it suffices to show   $\C$ is weakly integral when  $\C'=\text{Vec}$ by Lemma \ref{superdim8}.

Then it follows from  Theorem \ref{moddimen8} that $\C$ contains a non-trivial fusion subcategory  $\A$, and   $\text{dim}(\A_\C')\text{dim}(\A)=\text{dim}(\C)=8$
 by \cite[Theorem 3.10]{DrGNO2}, where  $\A_\C'$ be the centralizer of $\A$ in $\C$. Since  $\A\cap\A_\C'$ is a symmetric fusion category, it follows from Theorem  \ref{Lagrange} that $\text{dim}(\A\cap\A_\C')^2|8$, so $\text{dim}(\A\cap\A_\C')=1$ or $2$, which means $\A\cap\A_\C'\subseteq\text{sVec}$.

If   $\A\cap\A_\C'=\text{sVec}$, then $\C$ contains a super-modular fusion category $\B$ of global dimension $4$   \cite[Theorem 3.10]{DrGNO2}.  By \cite[Example 5.1.2]{O3} $\B\cong \text{sVec}\boxtimes\C(\Z_2,\eta)$. Thus $\C\cong\C(\Z_2,\eta)\boxtimes\C(\Z_2,\eta)_\C'$ as modular fusion category by \cite[Theorem 3.13]{DrGNO2}. If   $\A\cap\A_\C'=\text{Vec}$, then $\C\cong\A\boxtimes\A_\C'$
 as modular fusion category \cite[Theorem 3.13]{DrGNO2}. Assume that $\text{rank}(\A)\leq \text{rank}(\A_\C')$, hence  $\A=\C(\Z_2,\eta)$ as $\text{rank}(\C)\leq8$ \cite[Lemma 4.2.2]{O3}. So   $\C(\Z_2,\eta)_\C'$ is either braided equivalent to an Ising category $\I$ or pointed   \cite[Example 5.1.2]{O3}, which means that $\C$ is weakly integral, as desired.
\end{proof}

Next, we consider the structure of pre-modular fusion categories of global dimension $9$.
 Given a  modular fusion category $\C$, we have the Galois symmetry. More precisely,  let
 \begin{align}
 \xi(\C):=\sum_{X\in\Q(\C)}\theta_X\text{dim}(X)^2/\sqrt{\text{dim}(\C)}
  \end{align}
    be the multiplicative central charge of $\C$, where  $\sqrt{\text{dim}(\C)}$ is the positive square root of $\text{dim}(\C)$,  $\theta$ is the ribbon structure of $\C$. Then  there exists a $3$-th root $\gamma$ of $\xi(\C)$ such that
    \begin{align}\label{GaloisSym}
    \sigma^2(t_X)=t_{\hat{\sigma}(X)},~\text{ for any $\sigma\in \text{Gal}( \FQ(T_\C/\gamma)/\FQ)$}
     \end{align}
by \cite[Theorem II]{DongLinNg}, where $t_X:=\theta_X/\gamma$, $\forall X\in\Q(\C)$.
So, for a modular fusion category of given global dimension, we can use the Galois symmetric to consider orbits of homomorphisms $\text{dim}(-)$ and $\text{FPdim}(-)$, since they are determined uniquely by  simple objects  of $\C$, see subsection \ref{subsection2.2}. The proof of the following theorem is inspired by \cite{Sch}.
\begin{theo}\label{dimension9}Pre-modular fusion categories of global dimension $9$ are either  pointed or braided equivalent to a Galois conjugate of modular fusion  category $\C(\mathfrak{so}_5,\zeta_{18}, 9)_\text{ad}$.
\end{theo}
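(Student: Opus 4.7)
The plan is to split into cases based on the Müger center $\C'$, dispatching the non-modular subcases to pointed categories, and then treating the modular subcase by arithmetic methods analogous to Theorem \ref{moddimen7}.

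First I would apply Theorem \ref{Lagrange} to $\C'\subseteq\C$: since $\C'$ is symmetric, $\dim(\C')=\text{FPdim}(\C')$ is a positive integer and $9/\dim(\C')$ is an algebraic integer, so $\dim(\C')\in\{1,3,9\}$ (in particular, $\C$ is not super-modular). If $\dim(\C')=9$ then $\C$ is symmetric of odd dimension, hence Tannakian: $\C\cong\text{Rep}(G)$ with $|G|=9$, so $G$ is abelian and $\C$ is pointed. If $\dim(\C')=3$ then $\C'\cong\text{Rep}(\Z_3)$ and $\C\cong\D^{\Z_3}$ for a modular $\D$ with $\dim(\D)=3$; by \cite[Example 5.1.2]{O3} $\D\cong\C(\Z_3,\eta)$ is pointed, and since the braided autoequivalence group $O(\Z_3,\eta)\cong\Z_2$ admits no nontrivial $\Z_3$-action, $\C\cong\C(\Z_3,\eta)\boxtimes\text{Rep}(\Z_3)$ is still pointed. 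This reduces the problem to the modular case $\C'=\text{Vec}$.

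Assume now $\C$ is modular and not pointed, so $\text{rank}(\C)<9$ by \cite[Lemma 4.2.2, Remark 4.2.3]{O3}. An elementary analysis of integer-dimension configurations summing to $9$, together with Lagrange on $\C_\text{pt}$ and the fusion-rule constraints for the candidate rank-$3$ datum with dimensions $(1,2,2)$, rules out the integral non-pointed case. Hence $\C$ is non-integral, and by Cauchy \cite{BNRW,DongLinNg} $\FQ(S_\C)\subseteq\FQ(T_\C)=\FQ(\zeta_{3^n})$ for some $n\geq 2$, with $\dim(-),\text{FPdim}(-):\text{Gr}(\C)\to\FC$ taking values in $\FQ(\zeta_{3^n}+\zeta_{3^n}^{-1})$. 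The Galois group $\Z_{3^{n-1}}$ acts with $3$-power orbits, and by modularity each Galois conjugate of $\dim(-)$ equals $h_{\hat\sigma(I)}$ for some $\sigma\in\text{Gal}(\C)$.

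The core of the proof then enumerates the formal codegrees $\{9/d_X^2\}_{X\in\Q(\C)}$ subject to the identity $\sum_X d_X^2=9$ (equivalently \eqref{classequation}), the $d$-number test \cite[Lemma 2.7]{O1}, the cyclotomic test (Theorem \ref{cyclotomic}), the Galois symmetry $\sigma^2(t_X)=t_{\hat\sigma(X)}$ of \eqref{GaloisSym}, and the orthogonality relations of $S$-matrix rows $s_{\hat\sigma(I),-}$. Paralleling the template of Theorem \ref{moddimen7}, these constraints should eliminate every candidate multiset except the one realized by $\C(\mathfrak{so}_5,\zeta_{18},9)_\text{ad}$, which is then identified by matching $S$- and $T$-matrices. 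The main obstacle is this combinatorial search: $3$-power cyclotomic fields admit a richer Galois-orbit structure than the $7$-power case handled in Theorem \ref{moddimen7}, so more candidate degree sequences survive the elementary arithmetic tests, and each must be excluded individually using the Galois symmetry together with $S$-matrix orthogonality before the quantum-group category can be pinned down uniquely.
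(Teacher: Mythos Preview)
Your reduction to the modular case via the M\"uger center is correct and matches the paper. The gap is in the modular case itself: you correctly invoke Cauchy to get $\FQ(T_\C)=\FQ(\zeta_{3^n})$ with $n\geq 2$, but you never bound $n$ from above, so the ``combinatorial search'' you describe is a priori over an infinite family of cyclotomic fields. The paper closes this by applying the Galois symmetry \eqref{GaloisSym} to the \emph{twists} rather than to the $S$-matrix: if $\theta_X$ is a primitive $3^k$-th root of unity, then the subgroup $G_0$ of squares in $\text{Gal}(\FQ(\zeta_{3^n})/\FQ)$ (of order $3^{n-1}$) moves $t_X$ through $3^{k-1}$ distinct values, hence moves $X$ through $3^{k-1}$ distinct simples. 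Since $\text{rank}(\C)\leq 8$ this forces $k\leq 2$ for every $X$, whence $n=2$. Now $\text{dim}(-)$ and $\text{FPdim}(-)$ land in the degree-$3$ field $\FQ(\zeta_9+\zeta_9^{-1})$, and non-integrality forces each to have exactly three Galois conjugates; this is what makes the rank-by-rank casework ($6$, $7$, $8$) finite and tractable.

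The second missing piece is the identification of the surviving category. The paper does \emph{not} pin it down by hand-matching $S$- and $T$-matrices as you propose. Instead, in rank $6$ it observes that $\C$ must be self-dual by \cite[Lemma 3.2]{BNRW}, so $\FQ(S_\C)\subseteq\FQ(\zeta_9+\zeta_9^{-1})$ and $\text{Gal}(\C)\cong\langle(123)(456)\rangle$, and then invokes Green's external classification \cite{Green} of rank-$6$ modular categories with precisely this Galois group to conclude $\C$ is a Galois conjugate of $\C(\mathfrak{so}_5,\zeta_{18},9)_\text{ad}$. Ranks $7$ and $8$ are then eliminated by the formal-codegree and cyclotomic tests exactly as in Theorem~\ref{moddimen7}. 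Without the bound $n=2$ and without either Green's result or a concrete substitute for it, your plan does not close.
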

\begin{proof}
Let $\C$ be an arbitrary pre-modular fusion category of global dimension $9$. Same  as Corollary \ref{predimen8}, it can be proved that $\C$ is pointed if $\C$ is not simple. Assume  $\C$ is not pointed below; in particular, $\text{FPdim}(\C)\neq9$ and  $6\leq\text{rank}(\C)\leq8$ by \cite{BNRW} and \cite[Lemma 4.2.2]{O3}.

Then we have $\FQ(S_\C)\subseteq\FQ(T_\C)=\FQ(\zeta_{3^n})$ for some primitive $3^n$-th root of unity $\zeta_{3^n}$ \cite{BNRW,DongLinNg}.   \cite[Theorem II, Lemma 2.2]{DongLinNg} say that there exists a $3$-th root $\gamma$ of $\xi(\C)$ such that $t_X=\theta_X/\gamma\in\FQ(\zeta_{3^n})$ and  $\sigma^2(t_X)=t_{\hat{\sigma}(X)}$ for all $\sigma\in\text{Gal}(\FQ(\zeta_{3^n})/\FQ)$ by equation \ref{GaloisSym}.  Let
\begin{align}
G_0:={\{\sigma|\sigma=\tau^2 ~\text{for some $\tau\in \text{Gal}( \FQ(\zeta_{3^n})/\FQ)$}}\},
 \end{align}
 hence the order of $G_0$ equals to $3^{n-1}$. For any simple object $X$ such that $\theta_X\neq1$, $\theta_X$ is a primitive   $3^k$-th root of unity for some $1\leq k\leq n$, then under the action of $G_0$, $t_X$ has $3^{k-1}$ Galois conjugates, so is $X$. Since $\text{rank}(\C)\leq8$,  we see $k\leq n\leq2$.

 If $n=k=1$, then $\text{dim}(X)^2\in\FQ(\zeta_3)$ is a $d$-number for any simple object $X$
  \cite[Theorem 1.8]{O1} , thus $\text{dim}(X)^2\in\Z$. While $\text{FPdim}(\C)=\frac{9}{\text{dim}(Z)^2}$ for some object $Z\in\Q(\C)$, so $\C$ is integral and  pointed, it is a contradiction.
 So $n=k=2$ and  $\text{Gal}(\FQ(T_\C)/\FQ)=\Z_2\times\Z_3$. Note that  both $\text{dim}(-)$ and $\text{FPdim}(-)$ take values in $\FQ(\zeta_9+\zeta_9^{-1})$, since $[\FQ(\zeta_9+\zeta_9^{-1}):\FQ]=3$ and $\C$ is not an integral fusion category, homomorphisms $\text{dim}(-)$ and $\text{FPdim}(-)$ each has exactly three Galois conjugates under the action of Galois group $\text{Gal}(\C)$.

If $\text{rank}(\C)=6$, then $\C$ is self-dual by \cite[Lemma 3.2]{BNRW}, so $s_{X,Y}$ are real algebraic integers for any simple objects $X,Y$, which means $\FQ(S_\C)\subseteq\FQ(\zeta_9+\zeta_9^{-1})$ and $\text{Gal}(\C)\cong\langle(123)(456)\rangle$. Hence, as a modular fusion category, $\C$ is  equivalent to one of the Galois conjugates of modular fusion category $\C(\mathfrak{so}_5,\zeta_{18},9)_\text{ad}$  by \cite[Theorem 3.5]{Green}.

If $\text{rank}(\C)=7$, then  there exists a  simple object $Y$ fixed by $\text{Gal}(\C)$, and the corresponding formal codegree  $\frac{9}{\text{dim}(Y)^2}$ is an integer, hence $\text{dim}(Y)^2=1$ as $\text{dim}(Y)^2\neq3$. Let $f_1$, $f_2$, $f_3=\text{FPdim}(\C)$ be conjugated formal codegrees of $\C$, formal codegrees of $\C$ are $9,9,9,9$, $f_1$, $f_2$ and $f_3$, then $\Pi_{i=1}^3f_i|729$ \cite[Proposition 9.4.2]{EGNO} and  equation \ref{classequation} says that
\begin{align*}
\frac{1}{f_1}+\frac{1}{f_2}+\frac{1}{f_3}=\frac{5}{9}.
\end{align*}
As Theorem \ref{moddimen7}, a direct computation shows that $f_i$ are roots of $x^3-\beta_1x^2+135x-243=0$ with  $9|\beta_1$ and $18\leq\beta_1\leq45$, or $x^3-\beta_1x^2+405x-729=0$ with $9|\beta_1$ and $18\leq\beta_1<150$. Integer $\beta_1$ does not satisfy the cyclotomic test (i.e., Theorem \ref{cyclotomic}) except $x^3-54x^2+405x-729=0$, however roots of  equation $x^3-54x^2+405x-729=0$ do not belong to field $\FQ(\zeta_9+\zeta_9^{-1})$, this is a contradiction.
Similarly,  the case when $\text{rank}(\C)=8$ can also be excluded by using previous arguments. This completes the proof.
\end{proof}

Let $\A:=YL\boxtimes \overline{YL}$ and $\B:=\C(\Z_5,\eta)\boxtimes\A$ be modular fusion categories, then  \begin{align*}
\text{dim}(\A\boxtimes\A)=\text{dim}(\B)=25,~\text{rank}(\A\boxtimes\A)=16,~\text{rank}(\B)=20.
\end{align*}
Moreover, there exist  modular fusion categories $\C$ of rank $3$ \cite{O2}, whose global dimensions are roots of the equation $x^3-14x^2+49x-49=0$. Assume that $\C_i$ ($1\leq i\leq3$)  are three conjugates of $\C$ such that the corresponding global dimensions are exactly conjugated roots of the previous equation. Let  $\D:=\C_1\boxtimes\C_2\boxtimes\C_3$, then $\text{dim}(\D)=49$ and   $\text{rank}(\D)=27$.
\begin{ques}Let $p>3$ be a  prime, assume that $\C$ is a  modular fusion category of global dimension $p^2$. Are there other non-pointed modular fusion categories when $p>3$?
\end{ques}

In the last, we classify  pre-modular fusion categories of global dimension $10$.
We begin with classification of super-modular fusion categories of global dimension $10$.
\begin{lemm}\label{supdimen10}Let $\C$ be a super-modular fusion category of global dimension $10$. Then  as a super-modular fusion category $\C\cong \text{sVec}\boxtimes\C(\Z_5,\eta)$  or $\C\cong \text{sVec}\boxtimes(YL\boxtimes \overline{YL})$.
\end{lemm}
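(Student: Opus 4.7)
The plan is to follow the strategy of Lemma \ref{superdim8}: since $\text{dim}(\C)=10$, we have $\text{rank}(\C)\leq 10$ by \cite[Lemma 4.2.2]{O3} and hence $|\Pi_0|\leq 5$, while equation \eqref{halfdimen} gives $\sum_{X\in\Pi_0}\text{dim}(X)^2=5$. Two constraints will drive the case analysis: by \cite[Corollary 3.4]{Yu}, $\tfrac{5}{\text{dim}(X)^2}$ is an algebraic integer for every $X\in\Pi_0$; and by Remark \ref{orbsigmaI}, since $\text{dim}(\C)\in\Z$, every object in the $\text{Gal}(\C)$-orbit of $I$ inside $\Pi_0$ has squared quantum dimension equal to $1$.

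Writing $\Pi_0=\{I,X_1,\dots,X_n\}$ with $d_i:=\text{dim}(X_i)$, the possibilities $|\Pi_0|=2,3$ are disposed of quickly. When $|\Pi_0|=2$, $d_1^2=4$ and $5/4$ is not an algebraic integer; when $|\Pi_0|=3$, $d_1^2+d_2^2=4$ has no positive-integer-square solution, so $\C$ is not integral and some $\hat\sigma(I)\in\{X_1,X_2\}$, say $X_1$, has $d_1^2=1$, forcing $d_2^2=3$ and the contradiction that $5/3$ is not an algebraic integer. When $|\Pi_0|=5$, a Galois-orbit analysis (using that singleton orbits give $d^2\in\Z$, two-element orbits give roots of some $x^2-sx+p\in\Z[x]$ with $s,p\in\Z_{>0}$, and larger orbits give $d$-numbers of degree $\geq 3$) combined with the algebraic integrality of $5/d^2$ forces every $d_i^2=1$. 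Then $\C$ is pointed of global dimension $10$, so $\C\cong\C(G,\eta)$ with $|G|=10$, and the hypothesis $\C'=\text{sVec}$ forces the $\Z_2$-summand of $G=\Z_2\times\Z_5$ to carry the form with $\theta=-1$, yielding $\C\cong\text{sVec}\boxtimes\C(\Z_5,\eta')$.

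The substantive case is $|\Pi_0|=4$, where $d_1^2+d_2^2+d_3^2=4$. No decomposition into positive integer squares exists, so $\C$ is not integral and the $\text{Gal}(\C)$-orbit of $I$ has size $\geq 2$; a size-three orbit $\{I,X_1,X_2\}$ would force $d_3^2=2$ so that $5/2$ is not an algebraic integer, while a size-four orbit violates $\sum_{X\in\Pi_0}d_X^2=5$. Hence the orbit is $\{I,X_1\}$ with $d_1^2=1$, and $X_2,X_3$ form a Galois pair with $d_2^2+d_3^2=3$ and $d_2^2 d_3^2=p\in\Z_{>0}$. The algebraic integrality of $5/d_2^2$, which is a root of $x^2-(15/p)x+25/p\in\Z[x]$, gives $p\mid 5$, and $p=5$ yields non-real roots, so $p=1$ and $\{d_2^2,d_3^2\}=\{(3+\sqrt{5})/2,(3-\sqrt{5})/2\}$, which are exactly the squared quantum dimensions of the non-trivial simples of $YL\boxtimes\overline{YL}$. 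Using the fusion rule $X_2\otimes X_2=I\oplus Y$ with $\text{dim}(Y)=(1+\sqrt{5})/2$, one identifies $\{I,X_2\}$ and $\{I,X_3\}$ with Yang--Lee and conjugate Yang--Lee fusion rings, so the braided subcategory $\D\subseteq\C$ they generate has dimension $5$ with M\"{u}ger center $\D'\subseteq\D\cap\C'=\D\cap\text{sVec}=\text{Vec}$, hence is modular. Applying \cite[Theorem 3.13]{DrGNO2} yields $\C\cong\D\boxtimes\D_\C'$; since $\text{dim}(\D_\C')=2$ and $(\D_\C')'=\C'=\text{sVec}$, we get $\D_\C'\cong\text{sVec}$ and $\D\cong YL\boxtimes\overline{YL}$.

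The main obstacle will be the last step: pinning down the decomposition $X_2\otimes X_2=I\oplus Y$ with $Y=X_2$ rather than $Y=\chi\otimes X_2$, so that $\{I,X_2\}$ genuinely closes into the Yang--Lee fusion ring (and symmetrically for $X_3$). I expect this to require comparing the ribbon twist $\theta_{X_2}$ against the known root-of-unity twist of the Yang--Lee generator, combined with the super-Tannakian constraint $\theta_\chi=-1$, in order to rule out the $\chi$-twisted alternative; once this identification is in place, the modularity of $\D$ is automatic from $\D\cap\text{sVec}=\text{Vec}$ together with the assumption $\C'=\text{sVec}$.
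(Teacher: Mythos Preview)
Your case analysis on $|\Pi_0|$ and the determination of the squared dimensions in the $|\Pi_0|=4$ case match the paper's proof (the paper obtains $m=1$ via the $d$-number test $m\mid 9$ rather than your divisibility $p\mid 5$, but the conclusion $\{d_{X_2}^2,d_{X_3}^2\}=\{(3\pm\sqrt5)/2\}$ is the same; for $|\Pi_0|=5$ the paper simply invokes \cite[Remark~4.2.3]{O3} rather than enumerating orbits).

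The genuine divergence, and the genuine gap, is the identification step. The paper does \emph{not} attempt to read off the Yang--Lee fusion rule $X_2\otimes X_2=I\oplus X_2$ directly. Instead it computes the full matrix $\widehat{S}$: the Galois relation $s_{X_1,Y}=d_{X_1}\sigma(d_Y)$ for the nontrivial $\sigma\in\text{Gal}(\FQ(\sqrt5)/\FQ)$, together with row-orthogonality of $\widehat{S}$, forces $d_{X_2}d_{X_3}=-1$ and $s_{X_2,X_2}=s_{X_2,X_3}=s_{X_3,X_3}=-1$. A short fusion-coefficient argument on $\dim(X_2\otimes X_3)=-1$ then excludes $d_{X_1}=+1$, so $d_{X_1}=-1$. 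With $\widehat{S}$ explicit, the naive fusion rules follow from \cite[Proposition~2.7]{BGNPRW}, and \cite[Lemma~4.4]{BPRZ} identifies the Yang--Lee fusion subcategory, whence $\C\cong\text{sVec}\boxtimes YL\boxtimes\overline{YL}$.

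Your alternative route has more unresolved issues than the one you flag. You have not shown that the object $Y:=X_2\otimes X_2\ominus I$ is simple: this would follow from $\text{FPdim}(X_2)=(1+\sqrt5)/2$, but you have not computed any FP-dimensions, and without the signs of the $d_{X_i}$ the object $Y$ could in principle decompose (for instance as a unit-dimension object together with a copy of $X_3$ or $\chi\otimes X_3$). The signs $d_{X_1}=\pm1$ and $d_{X_2}d_{X_3}=\pm1$ are exactly what orthogonality of $\widehat{S}$ supplies, and the twist argument you sketch for resolving $Y\in\{X_2,\chi\otimes X_2\}$ is not yet an argument. The $\widehat{S}$-computation is the natural tool here and settles all of these points at once.
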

\begin{proof}By using the same method as Lemma \ref{superdim8}, we know that there does not exist a rank $6$ super-modular fusion category of global dimension $10$. Thus, super-modular fusion categories $\C$ of global dimension $10$ have rank $8$ or $10$. If $\text{rank}(\C)=10$, then $\C$ is pointed
 by \cite[Remark 4.2.3]{O3}, and $\C\cong \text{sVec}\boxtimes\C(\Z_5,\eta)$ as super-modular fusion category.

 Assume $\text{rank}(\C)=8$, let $\Pi_0={\{I,X_1,X_2,X_3}\}$  and  $\Gamma:={\{\hat{\sigma}(I)|\sigma\in \text{Gal}(\C)}\}$, $d_{X_i}:=\text{dim}(X_i)$ for $1\leq i\leq3$ . If $|\Gamma|=1$, then $\C$ is integral by Remark \ref{orbsigmaI},  there is no solution for $5=1+\sum_{i=1}^3d_{X_i}^2$
  by \cite[Corollary 3.4]{Yu}, however.  If $|\Gamma|=3$, then Remark \ref{orbsigmaI} says that  there exists a simple object $Y\in\Pi_0$ such that $\text{dim}(Y)^2=2$, it is contradicting to
 \cite[Corollary 3.4]{Yu}.

Hence, $|\Gamma|=2$. Assume $\Gamma={\{I,X_1}\}$, thus $d_{X_2}^2+d_{X_3}^2=3$. We claim that   exists $\sigma\in \text{Gal}(\C)$ such that $\hat{\sigma}(X_1)=X_2$. If not, then $10/d_{X_2}^2$ is invariant under the action of group $\text{Gal}(\C)$, so $d_{X_2}^2$ and $d_{X_3}^2$ are integers, it is impossible by \cite[Corollary 3.4]{Yu}. Thus,  $d_{X_2}^2$ and $d_{X_3}^2$ are conjugated roots of the following equation $x^2-3x+m=0$ for some positive integer $m$. Since $d_{X_2}^2$ is a   $d$-number \cite[Theorem 1.8]{O1}, $m$ divides $9$ by \cite[Lemma 2.7]{O1}, so $m=1$. Without loss of generality, let   $d_{X_2}^2=\frac{3+\sqrt{5}}{2}$ and $d_{X_3}^2=\frac{3-\sqrt{5}}{2}$, so all simple objects  $X_i$ ($1\leq i\leq3$) are self-dual as they have different quantum dimensions.

Let $\sigma\in\text{Gal}(\FQ(\sqrt{5})/\FQ)\subseteq\text{Gal}(\C)$ be the unique element such that $\sigma(\sqrt{5})=-\sqrt{5}$, then
\begin{align}
\sigma(d_{X_i})=\sigma(\frac{s_{I,X_i}}{s_{I,I}})
=\frac{s_{\hat{\sigma}(I),X_i}}{d_{\hat{\sigma}(I)}}.
\end{align}
We see that $\hat{\sigma}(I)=X_1$ and $\hat{\sigma}(X_2)=X_3$, so $s_{X_1,X_i}=d_{X_1}\sigma(d_{X_i})$ for all $1\leq i\leq3$. Indeed, if $\hat{\sigma}(I)=I$, then $\sigma(d_{X_i})=d_{X_i}$, it is impossible. The orthogonality of matrix $\hat{S}$ shows that
\begin{align}
d_{X_2}d_{X_3}=-1, s_{X_2,X_3}=s_{X_2,X_2}=s_{X_3,X_3}=-1.
\end{align}
Therefore, we obtain
\begin{align*}
\hat{S}=\left(
          \begin{array}{cccc}
            1 & d_{X_1} &  d_{X_2} &  d_{X_3}\\
            d_{X_1} & 1 & d_{X_1}d_{X_3} & d_{X_1}d_{X_2} \\
            d_{X_2} &  d_{X_1}d_{X_3} & -1 & -1 \\
           d_{X_3}&  d_{X_1}d_{X_2} & -1 & -1 \\
          \end{array}
        \right).
\end{align*}

If $ d_{X_1}=1$, then   $-1=\text{dim}(X_2\otimes X_3)=a+d_{X_2}b+d_{X_3}c$ for some non-negative integers $a,b,c$, which shows that $(a+1)^2=b^2$ and $b=c$, so $b=c=1$ and $a=0$, impossible, hence $d_{X_1}=-1$. As  Lemma \ref{superdim8},   the naive fusion rules of $\C$
can be obtained through \cite[Proposition 2.7]{BGNPRW}, we see that $\C$ contains  fusion category with  fusion rules as Yang-Lee fusion category $YL$ \cite[Lemma 4.4]{BPRZ}, then $\C\cong \text{sVec}\boxtimes YL\boxtimes \overline{YL}$ as a super-modular fusion category.
\end{proof}
To classify modular fusion categories of global dimension $10$, we need the following lemma.
\begin{lemm}\label{lemdim10}Let $\C$ be a  modular fusion category of global dimension $10$. If $\text{FPdim}(X)\in\FQ(\sqrt{5})$ for any object $X\in\Q(\C)$ and $\text{FPdim}(\C)=15+5\sqrt{5}$, then $\C\cong YL\boxtimes\overline{YL}\boxtimes\C(\Z_2,\eta)$ as a modular fusion category.
\end{lemm}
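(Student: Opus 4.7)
The plan is to identify $\C$ through its adjoint and pointed subcategories, then invoke a splitting theorem. Since $YL\boxtimes\overline{YL}\boxtimes\C(\Z_2,\eta)$ has global dimension $10$ and Frobenius--Perron dimension $15+5\sqrt{5}$ matching the hypotheses, the task is to force $\C$ into this shape.

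First I would analyse the adjoint subcategory. For a modular category the universal grading group $G$ satisfies $|G|=|\text{Inv}(\C)|$ and $\text{dim}(\C_{\text{ad}})=\text{dim}(\C)/|G|$, and Theorem \ref{Lagrange} forces $|G|\in\{1,2,5,10\}$. If $|G|=10$ then $\C$ is pointed of dimension $10$, so $\text{FPdim}(\C)=10$, a contradiction. If $|G|=5$, then $\text{dim}(\C_{\text{ad}})=2$, so $\C_{\text{ad}}$ is pointed and isomorphic to $\C(\Z_2,\eta)$; since $\C_{\text{pt}}\cap\C_{\text{ad}}$ has dimension dividing both $5$ and $2$ it is trivial, so by M\"{u}ger's splitting theorem \cite[Theorem 3.13]{DrGNO2} $\C\cong\C_{\text{pt}}\boxtimes\C_{\text{ad}}$ is again pointed with $\text{FPdim}(\C)=10$, contradiction. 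The goal therefore reduces to showing $|G|=2$.

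Granting $|G|=2$, the conclusion follows quickly. The subcategory $\C_{\text{ad}}$ is pre-modular of global dimension $5$; by \cite[Example 5.1.2]{O3} it is either $\C(\Z_5,\eta)$ or $YL\boxtimes\overline{YL}$, and the first option would again force $\text{FPdim}(\C)=10$. Hence $\C_{\text{ad}}\cong YL\boxtimes\overline{YL}$ as pre-modular fusion categories. The pointed part $\C_{\text{pt}}$ has dimension $2$, so $\C_{\text{pt}}\cong\C(\Z_2,\eta)$, and $\C_{\text{pt}}\cap\C_{\text{ad}}$ is trivial by dimension coprimality, which makes $\C_{\text{pt}}$ non-degenerate. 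A final application of \cite[Theorem 3.13]{DrGNO2} then gives $\C\cong\C_{\text{pt}}\boxtimes(\C_{\text{pt}})'\cong\C(\Z_2,\eta)\boxtimes YL\boxtimes\overline{YL}$.

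The hard part is ruling out $|G|=1$. In that case $\C=\C_{\text{ad}}$ has no non-trivial invertible object, every non-identity simple $X$ satisfies $\text{FPdim}(X)>1$, and $\sum_{X\ne I}\text{FPdim}(X)^2=14+5\sqrt{5}$. Using Remark \ref{totallypost} together with $\text{FPdim}(X)\in\FQ(\sqrt{5})$, each $\text{FPdim}(X)^2=\frac{a_X+b_X\sqrt{5}}{2}$ arises from $\text{FPdim}(X)=\frac{c+d\sqrt{5}}{2}$ with $c\equiv d\pmod{2}$, restricting $(a_X,b_X)$ to a short explicit list such as $(3,1),(7,3),(8,0),(10,0),(12,4),(15,5),(18,0),(18,8)$. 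I would enumerate the decompositions of $(28,10)$ into such pieces without a $(2,0)$ summand and, for each candidate profile, cross-check against the formal codegree equation \ref{classequation} (where each $\text{dim}(X)^2$ must be a totally positive $d$-number in $\Z[\frac{1+\sqrt{5}}{2}]$ with $10/\text{dim}(X)^2$ a formal codegree, so $\text{dim}(X)^2$ is confined to $\{1,\phi^{\pm 2},\phi^{\pm 4},2,2\phi^{\pm 2},5\}$), the Galois pairing of simples with conjugate $\text{dim}^2$ which equalises multiplicities, the cyclotomic test Theorem \ref{cyclotomic}, the $d$-number test \cite[Lemma 2.7]{O1}, and the bound $|\text{dim}(X)|\le\text{FPdim}(X)$. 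Following the style of the analyses in Theorem \ref{moddimen7} and Theorem \ref{dimension9}, I expect no configuration to survive all these constraints, so $|\text{Inv}(\C)|\geq 2$ and $|G|=2$ as required.
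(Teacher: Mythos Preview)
Your route via the universal grading is a legitimate alternative to the paper's, which works through $\C_{\text{int}}$ instead: the paper shows that if $\C_{\text{int}}\neq\text{Vec}$ then Theorem \ref{Lagrange} forces $\dim(\C_{\text{int}})\in\{2,5\}$, argues $\C_{\text{int}}$ is non-degenerate, and applies \cite[Theorem 3.13]{DrGNO2} to split it off. Your reductions for $|G|\in\{2,5,10\}$ reach the same endpoint by a parallel path (a small correction: in the $|G|=5$ case the splitting step is superfluous, since once $\C_{\text{ad}}$ is pointed of order $2$ it sits inside $\C_{\text{pt}}$ of order $5$, already a contradiction). Note also that $|G|=1$ in fact forces $\C_{\text{int}}=\text{Vec}$, because a non-trivial $\C_{\text{int}}$ of dimension $2$ or $5$ would be pointed and supply invertibles; so the integer entries $(8,0)$ and $(18,0)$ in your list cannot occur, and your hard case coincides with the paper's.

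The genuine gap is that you do not carry out the $|G|=1$ case but only declare an expectation. The paper's argument here is short and fusion-theoretic rather than a constraint search. Under $\C_{\text{int}}=\text{Vec}$ every non-trivial simple has $\text{FPdim}(X_i)=\frac{\alpha_i+\beta_i\sqrt{5}}{2}$ with positive integers $\alpha_i,\beta_i$, and the two equations $\sum_i(\alpha_i^2+5\beta_i^2)=56$ and $\sum_i\alpha_i\beta_i=10$ admit solutions only for $r=6$ or $r=9$; in both cases some self-dual simple $Y$ has $\text{FPdim}(Y)=\frac{1+\sqrt{5}}{2}$. Then $Y\otimes Y=I\oplus Z$ with $Z$ simple of the same FP-dimension. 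If $Z\cong Y$ one obtains a Yang--Lee subcategory, which is non-degenerate and splits off, producing a non-trivial $\C_{\text{int}}$ in the complement; if $Z\not\cong Y$ then $Y\otimes Z$ must be simple, and expanding $Y\otimes(Y\otimes Z)=Z\oplus Z\otimes Z$ forces $Y\otimes Z\cong Y^*=Y$, contradicting FP-dimensions. This two-line fusion argument is the idea your proposal is missing; the battery of codegree, $d$-number and cyclotomic checks you outline is both heavier and, as written, not shown to terminate in an empty list.
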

\begin{proof}We only need to show $\C_\text{int}\neq\text{Vec}$. Indeed, if $\C_\text{int}\neq\text{Vec}$, then $\text{FPdim}(\C_\text{int})=\text{dim}(\C_\text{int})=2$ or $5$  by Theorem \ref{Lagrange}, obviously $\C_\text{int}$ can  not be symmetric, thus $\C_\text{int}\cong\C(\Z_2,\eta_1)$ or $\C(\Z_5,\eta_2)$ as a modular fusion category. If   $\C_\text{int}\cong\C(\Z_5,\eta_2)$, then $\C$ has to be  integral and $\text{FPdim}(\C)=10$,   it is impossible. So, $\C\cong\C(\Z_2,\eta_1)\boxtimes YL\boxtimes\overline{YL}$ by \cite[Example 5.1.2]{O3},  since $\C$ is not integral.
On the contrary, we assume that $\C_\text{int}=\text{Vec}$ below.

Let $\Q(\C)={\{X_i|1\leq i\leq r}\}$ and $X_1=I$, where $r=\text{rank}(\C)$. Then   $6\leq r\leq9$ \cite[Lemma 4.2.2]{O3} \cite{BNRW},   Remark \ref{totallypost} says that there exist  positive integers $\alpha_i$ and $\beta_i$ such that $ \text{FPdim}(X_i)=\frac{\alpha_i+\beta_i\sqrt{5}}{2}$ for all $1\leq i\leq r$, thus
\begin{align}
15+5\sqrt{5}=1+\sum^r_{i=2} \text{FPdim}(X_i)^2=1+\sum^r_{i=2}\frac{\alpha_i^2+5\beta_i^2+2\alpha_i\beta_i\sqrt{5}}{4}.
\end{align}
Hence, $56=\sum^r_{i=2}\alpha_i^2+5\beta_i^2$ and $10=\sum^r_{i=2}\alpha_i\beta_i$. Without loss of generality, let  $\beta_i\leq\beta_j$ if $i\leq j$. Then a direct computation shows that there is a solution if and only if when $r=6,9$.

If $r=6$, then $(\alpha_2,\alpha_3,\alpha_4,\alpha_5,\alpha_6)=(1,1,1,3,2)$, $(\beta_2,\beta_3,\beta_4,\beta_5,\beta_6)=(1,1,1,1,2)$. If  $r=9$, then $(\beta_2,\beta_3,\beta_4,\beta_5,\beta_6,\beta_7,\beta_8,\beta_9)=(1,1,1,1,1,1,1,1)$ and there is a unique $X_j$ such that $\text{FPdim}(X_j)=\frac{3+\sqrt{5}}{2}$. From both cases, we know that $\C$ contains a self-dual simple object $Y$ of FP-dimension $\frac{1+\sqrt{5}}{2}$, thus $Y\otimes Y=I\oplus Z$ where $Z$ is also a self-dual simple object of FP-dimension $\frac{1+\sqrt{5}}{2}$. If $Z\cong Y$, then $\C$ contains a Yang-Lee fusion category $YL$ and $\C\cong YL\boxtimes \A$, where $\A$ is a modular fusion category of rank $3$ or $4$, \cite{RSW} says that $\A\cong\overline{YL}\boxtimes\C(\Z_2,\eta)$, this contradicts to the assumption $\C_\text{int}=\text{Vec}$. If $Z\ncong Y$, then $Y\otimes Z$ has to be simple by computing FP-dimensions of objects, while
\begin{align*}
Y\otimes(Y\otimes Z)=(Y\otimes Y)\otimes Z=Z\oplus Z\otimes Z,
\end{align*}
so $Y\otimes Z$ is the dual object of $Y$ since $Z$ is self-dual, it is impossible as they have different FP-dimensions. Therefore, $\C_\text{int}\neq\text{Vec}$ as claimed.
\end{proof}

\begin{theo}\label{moddim10} Let $\C$ be a   modular fusion category of global dimension $10$, then $\C$ is not simple.
\end{theo}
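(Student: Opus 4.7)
The plan is a proof by contradiction modeled closely on Theorem \ref{moddimen8}. Suppose $\C$ is a simple modular fusion category of global dimension $10$. Theorem \ref{Lagrange} says any proper fusion subcategory has dimension dividing $10$, hence dimension $2$ or $5$; simplicity rules both out, so $\C_\text{pt} = \C_\text{int} = \text{Vec}$, and in particular $\C$ is neither integral nor pseudo-unitary, so $\text{FPdim}(\C) \neq 10$. (The pointed case $\C \cong \C(\Z_{10},\eta)$ itself fails simplicity since it contains $\C(\Z_2,\eta)$, and the integral-but-not-pointed case is ruled out analogously to Corollary \ref{predimen8} via Tannakian descent combined with Lemma \ref{supdimen10}.) Combining \cite[Lemma 4.2.2]{O3}, \cite[Remark 4.2.3]{O3}, and \cite{BNRW} then leaves $6 \leq \text{rank}(\C) \leq 9$.

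Next I bring in the Galois data. By the Cauchy theorem \cite{BNRW,DongLinNg}, $\FQ(S_\C) \subseteq \FQ(T_\C) = \FQ(\zeta_{2^a 5^b})$ with $a,b$ bounded via \cite[Corollary 8.18.2]{EGNO}, so $\text{Gal}(\FQ(T_\C)/\FQ)$ is a subgroup of $(\Z/2^a)^\times \times (\Z/5^b)^\times$. The Verlinde formula identifies the formal codegrees with the numbers $10/\text{dim}(X)^2$ for $X \in \Q(\C)$, and $\text{FPdim}(\C) = 10/\text{dim}(Z_0)^2$ for a distinguished simple $Z_0$. Writing $\Gamma$ for the orbit of $I$ under the induced action of $\text{Gal}(\C)$ on $\Q(\C)$ and applying $\sigma$ to the formal codegree $10 = \text{dim}(\C)$ yields $\text{dim}(Y)^2 = 1$ for every $Y \in \Gamma$, so $|\Gamma|$ of the formal codegrees equal $10$. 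The orbit of $Z_0$ contributes the Galois conjugates $f_1, \ldots, f_m$ of $\text{FPdim}(\C)$, and the remaining orbits contribute further formal codegrees.

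The core of the argument is then a case analysis on the triple $(\text{rank}(\C), |\Gamma|, m)$, exactly in the style of the proofs of Theorems \ref{moddimen7} and \ref{moddimen8}. One plugs the orbit data into the class equation \eqref{classequation}, uses that the product of every full Galois orbit of formal codegrees divides a power of $10$, and invokes the $d$-number test \cite[Lemma 2.7]{O1}, the cyclotomic test Theorem \ref{cyclotomic}, and the real-embedding constraint $f_i \in \FQ(\zeta_{2^a 5^b} + \zeta_{2^a 5^b}^{-1})$ to cut the admissible symmetric functions of the $f_i$ down to a short list of candidate integer polynomials. In every branch the conclusion is either a direct arithmetic impossibility, or that $\text{FPdim}(X) \in \FQ(\sqrt{5})$ for every simple $X$ together with $\text{FPdim}(\C) = 15 + 5\sqrt{5}$; in that surviving branch Lemma \ref{lemdim10} identifies $\C$ with $YL \boxtimes \overline{YL} \boxtimes \C(\Z_2,\eta)$, which is evidently not simple, delivering the final contradiction. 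The main obstacle I expect is the bookkeeping of the case analysis, especially the mixed-orbit situation where $\text{dim}(-)$ and $\text{FPdim}(-)$ have different Galois orbit sizes (the analogue of case (3) in Theorem \ref{moddimen8}); to finish this case one may need to invoke \cite[Proposition 1.8]{GS} on FP-dimension gradings to force the Galois action to be trivial on $\text{FPdim}(\C)$.
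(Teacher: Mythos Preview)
Your plan is correct and hits all the same structural beats as the paper's proof: contradiction via simplicity, the rank window $6\leq\text{rank}(\C)\leq 9$, Galois orbit analysis for $\text{dim}(-)$ and $\text{FPdim}(-)$ inside $\text{Gal}(\FQ(\zeta_{100})/\FQ)$, reduction of the surviving branch to $\text{FPdim}(\C)=15+5\sqrt{5}$ handled by Lemma~\ref{lemdim10}, and the invocation of \cite[Proposition 1.8]{GS} for the mixed-orbit case. The one substantive difference is in how the orbit sizes are constrained. You propose a brute-force case analysis on $(\text{rank}(\C),|\Gamma|,m)$ in the style of Theorems~\ref{moddimen7} and~\ref{moddimen8}; the paper instead invokes Siegel's trace theorem \cite[Theorem III]{Siegel}: writing $n_1=|\Gamma|$ and $n_2$ for the orbit size of $Z_0$, the $n_1$ simples in $\Gamma$ contribute $1$ each to $\text{dim}(\C)$ and the $n_2$ simples in the orbit of $Z_0$ contribute the trace of the totally positive algebraic integer $\text{dim}(Z_0)^2$, giving $10>n_1+\tfrac{3}{2}n_2$ unless $\text{dim}(Z_0)^2=\tfrac{3\pm\sqrt5}{2}$. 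This single inequality collapses the case analysis to $(n_1,n_2)\in\{(2,2),(2,4),(4,2),(5,2)\}$ independently of the rank, eliminating precisely the bookkeeping you flag as the main obstacle. Your approach would work but is longer; the paper's is the cleaner route.
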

\begin{proof}
Let $\C$ be a  modular fusion category of global dimension $10$, then $\text{rank}(\C)=6,7,8,9$ \cite{BNRW} \cite[Lemma 4.2.2]{O3} and $\text{ord}(T_\C)$ divides $100$ by \cite[Corollary 8.18.2]{EGNO}. By \cite[Theorem II]{DongLinNg}
$\text{Gal}(\C)\subseteq\text{Gal}(\FQ(T_\C)/\FQ)\subseteq\text{Gal}(\FQ(\zeta_{10^2})/\FQ)$. Let $n_1$ and $n_2$ be the numbers of Galois conjugates of homomorphisms $\text{dim}(-)$ and $\text{FPdim}(-)$, respectively, and for any object $X$ we have $\text{FPdim}(X)=h_{Z_0}(X)$ for object $Z_0\in\Q(\C)$. Notice that simple objects   conjugated to $I$ have dimension $1$, then Siegel'trace theorem \cite[Theorem III]{Siegel} says that
 \begin{align}\label{traceequt}
 \text{dim}(\C)=\sum_{X\in\Q(\C)} \text{dim}(X)^2\geq n_1+\text{tr}(\text{dim}(Z_0)^2)> n_1+\frac{3}{2}n_2,
 \end{align}
unless $\text{dim}(Z_0)^2$ is a Galois conjugate of $\frac{3-\sqrt{5}}{2}$; here $\text{tr}(\text{dim}(Z_0)^2)$ is the sum of squared dimensions of simple objects that are  in the  orbit of $Z_0$. If $\text{dim}(Z_0)^2=\frac{3-\sqrt{5}}{2}$, then $\text{FPdim}(\C)=15+5\sqrt{5}$,   $\C$ is not simple by Lemma \ref{lemdim10}, however.
Assume that $\text{dim}(Z_0)^2\neq\frac{3\pm\sqrt{5}}{2}$ below.

Note that $\text{Gal}(\FQ(\zeta_{10^2})/\FQ)\cong\Z_2\times\Z_4\times\Z_5$, and $\FQ(\zeta_{10^2})$  has exactly three quadratic subfields $\FQ(\sqrt{5})$, $\FQ(\sqrt{-5})$ and $\FQ(\sqrt{-1})$, so $\text{Gal}(\FQ(\zeta_{10^2}+\zeta^{-1}_{10^2})/\FQ)\cong\Z_4\times\Z_5$ as $\FQ(\zeta_{10^2}+\zeta^{-1}_{10^2})$ has a unique quadratic subfield $\FQ(\sqrt{5})$.
Since homomorphisms $\text{dim}(-)$ and $\text{FPdim}(-)$ takes values in $\FQ(\zeta_{10^2}+\zeta^{-1}_{10^2})$, $n_1=2,4,5$.
If $n_1=4,5$, then $n_2=2$ by equation \ref{traceequt},  and we obtain a quadratic equation $x^2-bx+a=0$ with $\sqrt{b^2-4a}=m\sqrt{5}$ for some positive integer $m$, where  $a$ is the product of $\text{FPdim}(\C)$ and its Galois conjugate, $b$ is  the sum of $\text{FPdim}(\C)$ and its Galois conjugate. Thus, $a$ divides both $100$ and $b^2$ by \cite[Lemma 2.7]{O1} and $b>10$, and equation \ref{classequation} means that $b/a\leq3/5$, it is easy to see  that there is no solution except $\text{FPdim}(\C)=15+5\sqrt{5}$.

If $n_1=2$, then $n_2=2$ or $4$ by equation \ref{traceequt}. Since $\FQ(\zeta_{10^2}+\zeta^{-1}_{10^2})$ has a unique quadratic subfield  $\FQ(\sqrt{5})$,  $\text{dim}(Z_0)\in\FQ(\sqrt{5})$, so $\text{FPdim}(\C)\in\FQ(\sqrt{5})$ as $\text{FPdim}(\C)=\frac{10}{\text{dim}(Z_0)^2}$. If $n_2=4$,  let $\mathbb{K}=\FQ(\text{FPdim}(X): \forall X\in\Q(\C))$, then $\C$ is faithfully graded by $\text{Gal}(\mathbb{K}/\FQ(\text{FPdim}(\C))$ by \cite[Proposition 1.8]{GS}, which must be  a non-trivial group as $\text{FPdim}(-)$ has four Galois conjugates. Thus $\C$ contains a non-trivial fusion subcategory. If $n_2=2$, then $\mathbb{K}=\FQ(\sqrt{5})$ and one can easily obtain that $\text{FPdim}(\C)=15+5\sqrt{5}$ by previous arguments. Hence, modular fusion categories of  dimension $10$ are not simple.
\end{proof}

\begin{coro}\label{corodimen10}Let $\C$ be a pre-modular fusion category and $\text{dim}(\C)=10$. Then as  a pre-modular fusion category,  $\C$ is pointed,  or $\C\cong (YL\boxtimes \overline{YL})^{\Z_2}$, or $\C\cong (YL\boxtimes \overline{YL})\boxtimes\D$, or $\C$ has the same fusion rules as $\text{Rep}(D_{10})$, where $\D$ is a pointed  fusion category of global dimension $2$, $D_{10}$ is the dihedral group of order $10$.
\end{coro}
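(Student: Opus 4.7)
My plan is to stratify by the structure of the Müger center $\C'$, which is a symmetric fusion subcategory of $\C$ and hence, by Deligne's theorem, has the form $\text{Rep}(G,z)$ for some finite supergroup; in particular $\dim(\C')=\text{FPdim}(\C')\in\Z$. By Theorem \ref{Lagrange} this integer divides $10$, so $\dim(\C')\in\{1,2,5,10\}$, and I will run through these four values.

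I would first dispose of the two extreme cases. If $\dim(\C')=1$, then $\C$ is modular, and Theorem \ref{moddim10} supplies a proper nontrivial fusion subcategory $\A\subsetneq\C$. Since $\A\cap\A_\C'$ is symmetric with dimension dividing both $\dim(\A)$ and $\dim(\A_\C')$, its square divides $\dim(\A)\dim(\A_\C')=\dim(\C)=10$, forcing $\A\cap\A_\C'=\text{Vec}$. Then $\C\cong\A\boxtimes\A_\C'$ by \cite[Theorem 3.13]{DrGNO2}; the factor dimensions must be $\{2,5\}$, and Ostrik's classification \cite[Example 5.1.2]{O3} yields either $\C$ pointed or $\C\cong\C(\Z_2,\eta)\boxtimes YL\boxtimes\overline{YL}$. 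If $\dim(\C')=10$, Deligne's theorem gives $\C\cong\text{Rep}(G,z)$ with $|G|=10$, and since the only groups of order $10$ are $\Z_{10}$ and $D_{10}$, the result is either pointed or has the fusion rules of $\text{Rep}(D_{10})$. The super-Tannakian sub-case $\C'\cong\text{sVec}$ (with $\dim(\C')=2$) is already handled by Lemma \ref{supdimen10}, producing a pointed category or the Deligne product $(YL\boxtimes\overline{YL})\boxtimes\text{sVec}$.

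The remaining possibilities are the genuinely Tannakian intermediate cases $\C'\cong\text{Rep}(\Z_p)$ for $p\in\{2,5\}$. Then the de-equivariantization \cite[Proposition 4.26, Corollary 4.27]{DrGNO2} presents $\C$ as an equivariantization $\D^{\Z_p}$, where $\D$ is a modular fusion category of global dimension $10/p\in\{2,5\}$. By Ostrik's classification $\D$ is pointed or, when $p=2$, braided equivalent to $YL\boxtimes\overline{YL}$. For $p=5$ we have $\D\cong\C(\Z_2,\eta)$; since $\mathrm{Aut}(\Z_2)=1$, every $\Z_5$-action is trivial and $\C$ is pointed. For $p=2$ with $\D$ pointed, the two $\Z_2$-actions on $\C(\Z_5,\eta)$ are the trivial one (giving a pointed $\C$) and the inversion action (producing a rank-$4$ category whose simple objects have dimensions $1,1,2,2$, matching the $\text{Rep}(D_{10})$ fusion rules). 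For $p=2$ with $\D\cong YL\boxtimes\overline{YL}$ we land precisely in the case $\C\cong(YL\boxtimes\overline{YL})^{\Z_2}$.

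The main obstacle I anticipate is the bookkeeping for the equivariantization cases: one must verify that the inversion action is (up to equivalence) the only nontrivial braided $\Z_2$-action on $\C(\Z_5,\eta)$, and that the resulting equivariantization indeed reproduces the fusion rules of $\text{Rep}(D_{10})$ rather than merely matching its FP-dimensions. Once $\mathrm{Aut}^{\mathrm{br}}(\D)$ is enumerated for each candidate $\D$ and the orbit-stabilizer description of the simple objects of $\D^{\Z_p}$ is applied, the four possibilities listed in the statement exhaust the classification.
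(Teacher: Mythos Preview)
Your stratification by $\dim(\C')\in\{1,2,5,10\}$ and the handling of each stratum via Theorem \ref{Lagrange}, Lemma \ref{supdimen10}, Theorem \ref{moddim10}, and de-equivariantization to Ostrik's dimension-$\leq 5$ list is essentially the paper's own argument. The one step that needs more care is your claim in the modular case that ``the factor dimensions must be $\{2,5\}$'': a priori $\dim(\A)$ is only an algebraic integer (e.g.\ a Yang--Lee factor would have $\dim(\A)=\frac{5\pm\sqrt5}{2}$), and the paper closes this by instead bounding $\text{rank}(\A)\cdot\text{rank}(\A_\C')=\text{rank}(\C)\leq 10$ and invoking the low-rank modular classifications \cite{BNRW,RSW} to pin down the factors.
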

\begin{proof} If   $\C$ is symmetric, then $\text{FPdim}(\C)=10$, so either $\C\cong \text{Rep}(\Z_{10})$ or $\C\cong \text{Rep}(D_{10})$ as symmetric fusion category. Assume that $\C$ is not symmetric below, it follows from Theorem \ref{Lagrange} that $\text{dim}(\C')=1, 2$ or $5$.  When $\text{dim}(\C')=5$, then $\C'=\text{Rep}(\Z_5)$ is a  Tannakian fusion category. Hence  $\C_{\Z_5}\cong\C(\Z_2,\eta)$ is a modular fusion category \cite[Proposition 4.30]{DrGNO2}, and $\C\cong \text{Rep}(\Z_5)\boxtimes\C(\Z_2,\eta)$. When $\text{dim}(\C')=2$, then $\C'$ is braided equivalent to $\text{Rep}(\Z_2)$ or $\text{sVec}$. In the first case,  $\C\cong\B^{\Z_2}$, where $\B$ is a  modular fusion category of global dimension $5$. Hence \cite[Example 5.1.2]{O3} says that  $\C\cong (YL\boxtimes \overline{YL})^{\Z_2}$  or $\C\cong (YL\boxtimes \overline{YL})\boxtimes \text{Rep}(\Z_2)$ if $\B\cong YL\boxtimes\overline{YL}$,  and  $\C$ is pointed or $\C$ has the same fusion rules as $\text{Rep}(D_{10})$ if $\B\cong\C(\Z_5,\eta)$. If  $\C'\cong \text{sVec}$, then   it is exactly the conclusion of Lemma \ref{supdimen10}.

If $\C'=\text{Vec}$,  let $\A$ be an arbitrary non-trivial fusion subcategory of $\C$, since $\C$ is not simple by Theorem \ref{moddim10}. Notice that $\A\cap\A_\C'$ is a symmetric fusion subcategory, where $\A_\C'$ is the centralizer of $\A$ in $\C$,  $\text{dim}(\A\cap\A_\C')^2$ divides $10$ by Theorem \ref{Lagrange}. So $\A\cap\A_\C'=\text{Vec}$,  and $\C\cong\A \boxtimes\A_\C'$ as modular fusion category  \cite[Theorem 3.13]{DrGNO2}. Meanwhile $\text{rank}(\A)\text{rank}(\A_\C')=\text{rank}(\C)$ and $\text{rank}(\C)\leq10$
by \cite[Lemma 4.2.2]{O3}, then classification follows from \cite{BNRW,RSW}.
\end{proof}

\subsection{Spherical fusion categories of global dimension   $6$}\label{subsection4.3}
In this subsection, we   show that spherical fusion category of global dimension $6$ is weakly integral, which generalizes Proposition \ref{predimen6}.  The proof is similar to the argument of Theorem \ref{moddimen7}.

\begin{theo}\label{spherical6}Spherical fusion categories of global dimension $6$ are weakly integral.
\end{theo}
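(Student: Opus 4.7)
The plan is to follow the template of Theorem \ref{moddimen7}, but in place of the $S$-matrix techniques (which are unavailable in the non-braided setting) to work entirely with formal codegrees of $\text{Gr}(\C)$. Suppose for contradiction that $\C$ is a spherical fusion category with $\dim(\C)=6$ which is not weakly integral, so $\text{FPdim}(\C)\notin\Z$ and, in particular, the characters $\dim(-)$ and $\text{FPdim}(-)$ of $\text{Gr}(\C)$ are distinct one-dimensional representations with associated formal codegrees $6$ and $\text{FPdim}(\C)$. By Lemma 4.2.2 and Remark 4.2.3 of \cite{O3}, $r:=\text{rank}(\C)\leq 5$, since the extremal case $r=6$ forces $\C$ to be pointed, hence integral. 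Because $\text{Gr}(\C)\otimes_\Z\FC$ is semisimple of $\FC$-dimension $r\leq 5$ and already admits two distinct one-dimensional characters, any irreducible representation of dimension $\geq 2$ would push $\sum_\varphi\varphi(1)^2$ above $2\cdot 1+4=6$, a contradiction; hence $\text{Gr}(\C)$ is commutative with exactly $r$ formal codegrees $f_1=6,f_2,\ldots,f_r$, and equation \eqref{classequation} reduces to
\begin{align*}
\frac{1}{6}+\sum_{i=2}^{r}\frac{1}{f_i}=1.
\end{align*}

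The next step is to apply the arithmetic tools from \S\ref{subsection2.3}: each $f_i$ is a totally positive algebraic $d$-number with $6/f_i$ an algebraic integer and $f_i>\sqrt{2r/(r+1)}$, the full Galois orbit of $\text{FPdim}(\C)$ sits inside $\{f_2,\ldots,f_r\}$, and every orbit has an abelian minimal polynomial by Theorem \ref{cyclotomic}. I would then rule out $r$ case by case. Rank $r=2$ is immediate since $f_2=6/5$ is not an algebraic integer. For $r=3$ the non-trivial codegrees satisfy $1/f_2+1/f_3=5/6$ with $f_3=\text{FPdim}(\C)\geq 6$, and a short enumeration of quadratic polynomials $x^2-sx+p$ with $p\mid 36$, $6s=5p$ and the $d$-number divisibility $p\mid s^2$ leaves only $x^2-30x+36$; this candidate is then eliminated by combining the cyclotomic test with the realisability of $\text{FPdim}(\C)=1+\sum_{X\neq I}\text{FPdim}(X)^2$ inside $\FQ(\sqrt{21})$ and the standard bound $\text{FPdim}(X)^2\geq |X|^2$ together with $\sum_X|X|^2=6$. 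For $r=4,5$ one partitions $\{f_2,\ldots,f_r\}$ into Galois orbits and, for each orbit, lists candidate minimal polynomials $x^m-a_1x^{m-1}+\cdots\pm a_m$ obeying $a_m\mid 6^m$ and $a_m^i\mid a_i^m$, assembles them into configurations meeting the unit-fraction identity under $\max_i f_i\geq 6$, and verifies the abelian-Galois condition with the GAP routine used in the proof of Theorem \ref{moddimen7}.

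The main obstacle is the case analysis at $r=4,5$: the unit-fraction identity $\sum_{i\geq 2}1/f_i=5/6$ admits many combinatorial solutions, and isolated orbits such as the roots of $x^2-30x+36$ already pass both the cyclotomic and the $d$-number tests in isolation, so coarse arithmetic alone is not sufficient. I expect the decisive inputs to be the joint constraints $\text{FPdim}(X)^2\geq |X|^2$ and $\sum_{X}|X|^2=6$, which sharply bound how large the Galois conjugates of $\text{FPdim}(\C)$ can be while still leaving room for a genuine Frobenius--Perron decomposition inside the cyclotomic field generated by the $f_i$. Once these refinements are exhausted the finite search should close with no surviving configuration, forcing $\text{FPdim}(\C)\in\Z$ and hence $\C$ to be weakly integral.
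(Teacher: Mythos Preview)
Your setup through the class equation is exactly what the paper does: bound $\text{rank}(\C)\leq 5$, observe that $\text{Gr}(\C)$ is commutative, and analyse the formal codegrees via \eqref{classequation}, the $d$-number test, and Theorem~\ref{cyclotomic}. For $r=3$ the paper simply invokes the rank-$3$ classification \cite[Theorem 1.1]{O2} rather than arguing directly; your quadratic $x^2-30x+36$ does pass both the $d$-number and cyclotomic tests (since $\FQ(\sqrt{21})$ is abelian), so a direct argument there needs more than you have written.

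The substantive divergence is in the endgame for $r=4,5$. After the codegree sieve the paper is left with $\text{FPdim}(\C)=6\pm 3\sqrt{2}$ (and the analogous $f_1f_2=36$ case), and kills these not with the inequality $\text{FPdim}(X)^2\geq |X|^2$ but with two sharper inputs you do not invoke: Theorem~\ref{Lagrange} forces $\text{FPdim}(\C_\text{int})=\dim(\C_\text{int})\in\{1,2,3\}$, so $\C_\text{int}=\C_\text{pt}$ and one knows exactly how many simple objects have non-integral $\text{FPdim}$; then Remark~\ref{totallypost} says each such $\text{FPdim}(X_i)^2$ is $\tfrac{a_i+b_i\sqrt{2}}{2}$ with $a_i,b_i$ \emph{positive integers}, which turns ``realisability inside $\FQ(\sqrt{2})$'' into a short finite search that has no solutions. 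Your proposed constraint $\text{FPdim}(X)^2\geq |X|^2$ with $\sum_X|X|^2=6$ only yields $\text{FPdim}(\C)\geq 6$, which is already known, and does not by itself exclude $6+3\sqrt{2}$; the decisive leverage really comes from Theorem~\ref{Lagrange} and Remark~\ref{totallypost}.

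One minor point: in the non-integral situation the character $\dim(-)$ already has at least two Galois conjugates (this is what the paper extracts from \cite[Exercise 9.6.2]{EGNO}), so for $r=4$ there are \emph{two} codegrees equal to $6$ and the relevant identity is $\tfrac{1}{3}+\tfrac{1}{f_1}+\tfrac{1}{f_2}=1$ rather than $\tfrac{1}{6}+\cdots=1$. Your ``partition $\{f_2,\dots,f_r\}$ into orbits'' framework can absorb this if you allow an orbit $\{6\}$ among the $f_i$, but you should make that explicit or the enumeration will drift.
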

\begin{proof} Let $\C$ be spherical fusion category of global dimension $6$. It follows from
\cite[Remark 4.2.3]{O3} that $\C$ is pointed if and only if $\C$ has rank $6$. Moreover, if $\text{rank}(\C)=3$, this is   \cite[Theorem 1.1]{O2}. Below we assume that $\C$ is not weakly integral,   in particular, homomorphisms $\text{dim}(-)$ and $\text{FPdim}(-)$ are not in the same orbit under action of Galois group $\text{Gal}(\overline{\mathbb{Q}}/\mathbb{Q})$.
Hence, $\text{Gr}(\C)$ is commutative and $\text{rank}(\C)=4,5$. Note that homomorphisms from $\text{Gr}(\C)$ to $\FC$ are divided into two  or three orbits under the action of Galois group $\text{Gal}(\overline{\mathbb{Q}}/\mathbb{Q})$. Moreover, $\text{dim}(\C_\text{int})=\text{FPdim}(\C_\text{int})$ equals to $1,2,3$ by Theorem \ref{Lagrange}, so $\C_\text{int}=\C_\text{pt}$. Here, we only give a proof when $\text{rank}(\C)=4$, the other is same.
We know that orbits of $\text{dim}(-)$ and $\text{FPdim}(-)$ contain    $2$ homomorphisms
 \cite[Exercise 9.6.2]{EGNO}. Let $f_2$ be   the non-trivial Galois conjugate of $f_1=\text{FPdim}(\C)$. Then equation \ref{classequation} says  that
\begin{align*}
\frac{1}{3}+\frac{1}{f_1}+\frac{1}{f_2}=1,
\end{align*}
 in addition, $f_1f_2|36$ by \cite[Proposition 8.22]{ENO1} and $f_1+f_2>6$. Hence, $f_1f_2=12,18,36$.
 If $f_1f_2=12$, then $f_1=6$ and $f_2=2$, this is impossible.

If $f_1f_2=18$, then $f_1=6+3\sqrt{2}$ and $f_1=6-3\sqrt{2}$.
If $\text{FPdim}(\C_\text{int})=3$, then there is  a unique simple object $X$ such that $\text{FPdim}(X)^2=3+3\sqrt{2}$. Let $G(\C)=\Z_3=\langle g\rangle$, then $g\otimes X=X$. However,  $X\otimes X=I\oplus g\oplus g^2 \oplus n X$, so $n\text{FPdim}(X)=3\sqrt{2}$, this is impossible. If $\text{FPdim}(\C_\text{int})=2$, then Remark \ref{totallypost} says  that there exist positive integers $\alpha_i,\beta_i$ such that
\begin{align}
6+3\sqrt{2}=2+\frac{a_1+b_1\sqrt{2}}{2}+\frac{a_2+b_2\sqrt{2}}{2}
=2+(\frac{\alpha_1+\beta_1\sqrt{2}}{2})^2+(\frac{\alpha_2+\beta_2\sqrt{2}}{2})^2
\end{align}
is the sum of of simple objects of $\C$.  Assume $b_1\geq b_2$. Then $(b_1,b_2)=(5,1),(4,2)$, $(3,3)$, there does not exists such $(\alpha_i,\beta_i)$, however. If $\text{FPdim}(\C_\text{int})=1$, then \begin{align}
6+3\sqrt{2}=1+\sum_{i=1}^3\frac{a_i+b_i\sqrt{2}}{2}
=1+\sum_{i=1}^3(\frac{\alpha_i+\beta_i\sqrt{2}}{2})^2
\end{align}
is the sum of FP-dimensions of simple objects of $\C$, where $b_3\geq b_2\geq b_1$ and $\alpha_i,\beta_i$ are positive integers  by Remark \ref{totallypost}. Thus, $(b_1,b_2,b_3)=(1,1,4)$, or $(1,2,3)$, or $(2,2,2)$.
By a direct computation we see that there does not exist such integers $\alpha_i,\beta_i$. The subcase $f_1f_2=36$ can be proved by using similar arguments.

Hence, spherical fusion categories $\C$ of global dimension $6$ are always weakly integral.
Thus $\C$ is either pointed, or $\C$ has the same fusion rules as $\text{Rep}(S_3)$, or $\C_\text{pt}\cong\text{Vec}_{\Z_3}$ and $\C$ contains a unique simple object of FP-dimension $\sqrt{3}$,
see \cite[Theorem 1.1]{EGO} for details.
\end{proof}

\section*{Acknowledgements}
 The author is grateful to   V. Ostrik for  insightful conversations on \'{e}tale algebras and totally positive algebraic integers, particularly  for providing reference \cite{KiO}. And the author also thanks  Y. Wang for helps in understanding the Galois symmetry of modular categories. The author thanks the anonymous referee for   numerous suggestions that helped improve this paper substantially.  Part of this paper was written during a visit of the   author at University of Oregon supported by China Scholarship Council (grant No. 201806140143), he  appreciates the Department of Mathematics for their warm hospitality.

\section*{Data Availability Statement}
Data sharing not applicable to this article as no datasets were generated or analysed during the current study.

\bigskip
\author{Zhiqiang Yu\\ \thanks{Email:\,zhiqyumath@yzu.edu.cn}
\\{\small School of Mathematical Sciences,  Yangzhou University, Yangzhou 225002, China}}
\end{document}